\RequirePackage{fix-cm}
\documentclass[matprg,nospthms]{svjour3} 
\smartqed  
\usepackage{graphicx}
\usepackage{microtype}
\usepackage{graphicx}%
\usepackage{multirow}%
\usepackage{amsmath,amssymb,amsfonts}%
\usepackage{amsthm}%
\usepackage{mathrsfs}%
\usepackage[title]{appendix}%
\usepackage{xcolor}%
\usepackage{textcomp}%
\usepackage{manyfoot}%
\usepackage{booktabs}%
\usepackage{algorithm}%
\usepackage{algorithmicx}%
\usepackage{algpseudocode}%
\usepackage{listings}%
\newtheorem{defn}{Definition}[section]

\newtheorem{thm}{Theorem}[section]
\newtheorem{prop}{Proposition}[section]

\newtheorem{example}{Example}[section]
\newtheorem{remark}{Remark}[section]

\newcommand{\R}{\mathbb{R}}

\newcommand{\ds}{\displaystyle}
\usepackage{amsmath}
\usepackage{hyperref}
\usepackage[misc]{ifsym}
\begin{document}

\title{Directional first order approach for a class of bilevel programs
}


\author{Kuang Bai  \and Wei Yao \and Jane J. Ye \and Jin Zhang }


\institute{K. Bai \at School of Mathematics, Hunan University, Hunan, China\\
              \email{baik@hnu.edu.cn}           
           \and
           W. Yao \at School of Artificial Intelligence, Southern University of Science and Technology, Guangdong, China\\ \email{yaow@sustech.edu.cn}
           \and
           J. J. Ye \at Department of Mathematics and Statistics, University of Victoria, B.C., Canada\\ \email{janeye@uvic.ca}
           \and
           J. Zhang \at Corresponding author. Department of Mathematics, Southern University of Science and Technology, Guangdong, China\\ \email{zhangj9@sustech.edu.cn}         }

\date{Received: date / Accepted: date}

\maketitle

\begin{abstract}
In this paper, we study a class of bilevel optimization program, where the feasible set of the lower level program is independent of the upper level variable. For bilevel programs it is known that the first order reformulation of a bilevel program requires the convexity of the lower level program while reformulations involving the value function result in  difficult optimization problems. In this paper we propose a directional first order approach which does not require  convexity of the lower level program. First, we propose  some conditions under which  the lower level program can be equivalently characterized by its first order condition over a directional neighborhood around the local optimal condition. Next we give some conditions under which the classical first order optimality condition in the form of M-stationary condition still holds as a necessary optimality condition for the first order  reformulation of the bilevel program  even when the lower level program is nonconvex.
\keywords{Bilevel programs, Principal-agent problems, Directional first order approach, Directional necessary optimality conditions, M-stationary conditions}
\subclass{49J53, 90C26, 90C31, 90C46, 91A65}
\end{abstract}

\section{Introduction}
In this paper we consider the following bilevel program
\begin{equation}\label{BP}\tag{BP}
	\begin{aligned}
		\min_{x,y}\quad &F(x,y)\\
		\mathrm{s.t.}\quad &y\in S(x),\quad G(x,y)\leq 0,
	\end{aligned}
\end{equation}
where $S(x)$ denotes the solution set of the lower level program:
\begin{equation}\label{P(x)}\tag{P(x)}
	\min_{y}\  f(x,y) \quad \mbox{s.t.}\quad y\in Y,  
\end{equation}
where $Y\subseteq \mathbb R^m$ is a closed set, $F:\mathbb R^n\times\mathbb R^m\rightarrow\mathbb R,\ G:\mathbb R^n\times\mathbb R^m\rightarrow\mathbb R^q$ are continuously differentiable, and $f:\mathbb R^n\times\mathbb R^m\rightarrow\mathbb R$ is twice continuously differentiable. 
Program (\ref{BP}) is a special case of a general bilevel program in which the constraint set $Y$ is replaced by $Y(x)$, a set depending on the upper level variable $x$. 

Since the lower level constraint is independent of $x$, this kind of bilevel program is sometimes referred to as a simple bilevel program, see e.g.,  \cite{LMY14}. However it is not ``simple'' at all as the geometric and computational hardness of this class of problems have been  shown in the recent work~\cite{BLPV26a}. We should also point out that sometimes a simple bilevel program means that  all functions at both the upper and lower levels are independent of $x$ which is a special case of the bilevel program we study in this paper; see e.g., ~\cite{MS2023,MST2024,SS2017}. 

Throughout the paper we assume the solution set $S(x)$ is nonempty for each $x$.

Bilevel optimization was first proposed  by H. Von Stackelberg~\cite{Stackelberg} in 1934 to study problems in market economies where the decisions are made by two distinct decision-makers at separate hierarchical levels. Since then it has been widely applied in fields such as meta-learning and hyperparameter selection in machine learning \cite{GYYZZ,ML,tax,YYZZ}, principal-agent problems in economics \cite{M76,M99}; see \cite{Dempe,Luo} for more applications.  

Although (\ref{BP}) is simpler than the general bilevel program, it appears in many important applications; see \cite{Outrata,Renner15,Rogerson85} and the references within.


\subsection{Application in the principal-agent problem}

In the standard moral hazard model, a principal designs a contract to hire an agent to perform a project that yields a stochastic output. Let the output space be a finite set $S = \{s_1, \dots, s_n\}$ with cardinality $n$. The agent chooses an action $y \in Y$, and the resulting output $s \in S$ is distributed according to a probability function $P(s, y)$, where $P(s_j, y)$ represents the probability of output $s_j$ given action $y$. The principal cannot directly observe the agent's action but can observe the output, and thus designs a contract $x(s)$, which specifies the wage paid to the agent based on the observed output. 
The contract can be represented as a vector $x = (x(s_1), \dots, x(s_n))^T \in \mathbb{R}^n$, where $x_j:=x(s_j)$ is the wage corresponding to output $s_j$. 
The value of the output to the principal is given by the function $\pi(s) := (\pi(s_1), \dots, \pi(s_n))$, where $\pi_j:=\pi(s_j)$ denotes the value associated with output $s_j$. The principal's payoff for output $s_j$ is $v(\pi_j - x_j)$, 
and the agent’s payoff is $u(x_j) - c(y)$, where $v(\cdot)$ and $u(\cdot)$ are utility functions of the principal and agent, respectively, and $c(\cdot)$ denotes the cost function of the agent.

The principal's objective is to choose a contract $x$ that maximizes the expected utility, taking into account the agent’s optimal response $y$ to the incentives specified by the contract. Formally, the principal’s problem is formulated as:

\begin{align}
	\min_{x \in \mathbb{R}^n, y \in Y} \  & F(x, y) := -\sum_{j=1}^n v(\pi_j - x_j ) P(s_j, y)    \label{pa0}\tag{PA} \\
	\text{s.t.} \quad & f(x, y) := -\sum_{j=1}^n u(x_j) P(s_j, y) + c(y)+\underline{U} \leq 0, \label{IR}\tag{IR}\\
	& y \in \arg\min_{y' \in Y} f(x, y'), \label{IC}\tag{IC} 
\end{align}
where \eqref{IC} represents the incentive compatibility constraint, which ensures that the agent’s chosen action $y$ maximizes the expected utility under the contract. 
\eqref{IR} is the individual rationality constraint, requiring that the agent’s expected utility is no less than the reservation utility $\underline{U}$. This framework was first introduced by Mirrlees \cite{M76}. 

The classical method for solving the principal–agent  problem, known as the first order approach (FOA), involves first solving a relaxed version of the problem, in which the incentive compatibility constraint \eqref{IC} is replaced by its first order  condition. The optimality conditions for this relaxed problem are then derived.
Although previous studies have proposed various sufficient conditions under which the FOA is valid, Mirrlees presents a well-known counterexample  in \cite[Example 1]{M99} showing that the first order approach can fail: the optimal contract may not even be a stationary point of the relaxed problem.

\subsection{Existing approaches for deriving optimality conditions}
The most  common approach is the Karush–Kuhn–Tucker (KKT) approach or the first order  approach if the lower level constraint set  is formed by equality and inequality constraints.  The single level reformulation by the first order approach belongs to the class of mathematical programs with equilibrium constraints (MPEC); see e.g., \cite{GY19,Outrata,Ye00,Yebook,YY} for related research. The popularity of the first order approach is obvious since the resulting reformulations are more computational friendly. However, there are two major drawbacks to apply the first order approach. First, when the lower level problem is not convex, the reformulation by the first order approach is not equivalent to the original bilevel problem. Secondly, even if the lower level problem is convex, the multipliers in the KKT conditions introduce extra variables and consequently the resulting single level reformulation may not be equivalent to the original bilevel program in the sense of local optimality, see Dempe and Dutta \cite{DD}.  Let $(\bar x,\bar y)$ be a local optimal solution of (\ref{BP}). In the case $Y$ is convex and  the lower level objective function $f(x,y)$ is convex in variable $y$,  Ye and Ye \cite{YY} proposed replacing the  constraint $y\in S(x)$ locally around $(\bar x,\bar y)$ by  the generalized equation constraint:
\begin{equation*}
	0\in\nabla_yf(x,y)+ N_{Y}(y),\end{equation*}
where $N_{Y}(y)$ denotes the normal cone  to set $Y$ in convex analysis.                                                    Then under the pseudo-upper Lipschitz continuity/metric subregularity/calmness constraint qualification, they showed in \cite[Theorem 3.2]{YY} that the following first order necessary optimality condition in terms of limiting normal cone holds: 
there exist $((\bar\mu,\bar\nu),\bar\beta)\in\mathbb R^{2m}\times \mathbb R^q_+$  such that
\begin{eqnarray}\label{kkt}
	&& 0=\nabla F(\bar x,\bar y)+(0, \bar \mu) -\nabla (\nabla_yf)(\bar x,\bar y)^T
	\bar\nu+\nabla G(\bar x,\bar y)^T {\bar\beta},\nonumber\\
	&&(\bar\mu,\bar\nu)\in N_{{\rm gph}\, N_{Y}}(\bar y,-\nabla_yf(\bar x,\bar y)),\quad 
	\bar\beta\perp G(\bar x,\bar y),
\end{eqnarray}
where ${{\rm gph}\, N_{Y}}$ denotes  the graph of the  normal cone to $Y$, $N_{{\rm gph}\, N_{Y}}$ denotes the limiting normal cone to the nonconvex set ${{\rm gph}\, N_{Y}}$. Due to the usage of the limiting normal cone, this kind of optimality condition is referred as an Mordukhovich (M-)stationary condition in the literature.
Moreover in the case where  $Y=\{y\in\mathbb R^m \mid  g(y) \leq 0\}$ where  $g:\mathbb R^m\rightarrow\mathbb R^p$ is  convex and certain constraint qualification holds for the  lower level program $P(\bar x)$ at $\bar y$. Let  $\bar \lambda$ be  a multiplier for $P(\bar x)$ at $\bar y$. Then by \cite[Theorem 4.1]{YY}, the above M-stationary condition becomes 
the  existence of  $(\bar\nu,\bar\beta)\in\mathbb R^{m}\times \mathbb R^q_+$ and $ b^*\in \R^p$ such that 
\begin{eqnarray}\label{KKTnew}
	\nonumber
	&&	0=\nabla F(\bar x,\bar y)-\nabla(\nabla_yf+\nabla g^T\bar \lambda)(\bar  x,\bar y)^T  \bar \nu+(0,  \nabla g(\bar y)^T b^*) +\nabla G(\bar x,\bar y)^T {\bar\beta},\ \\
	&& (b^*, \nabla g(\bar y) \nu) \in N_{{\rm gph} N_{\R_-^p}}(g(\bar y), \bar {\lambda}), \quad \bar\beta\perp G(\bar x,\bar y).
\end{eqnarray}

To obtain  optimality conditions for  the general nonconvex bilevel program where the lower level constraint $Y(x)$ is formed by equality and inequality constraints,   Ye and Zhu \cite{YZ95} proposed to replace the constraint $y\in S(x)$  by 
the following  value function constraint 
$$
f(x,y)-V(x)\leq0,\ y\in Y(x),
$$
where $V(x)$ denotes the value function of the lower level problem (\ref{P(x)}):
$$V(x):=\inf\{f(x,y) \mid y\in Y(x)\}.$$
The advantage of the value function approach is that there is no convexity assumption required for the lower level problem.
The resulting single level problem is however a nonsmooth optimization problem  involving the value function which is an implicitly defined function. Normally for an optimization problem, Fritz John condition holds at a local minimizer without any constraint qualification. To obtain the KKT condition one would argue that there is no nonzero abnormal multiplier which is equivalent to saying that the nonsmooth Mangasarian-Fromovitz constraint qualification (MFCQ) holds and so the KKT condition  must hold by the Fritz John condition.  In \cite[Proposition 3.2]{YZ95}, Ye and Zhu discovered that this usual strategy never work for the value function reformulation since there always exist a nonzero abnormal multiplier due to the value function constraint.   They then proposed  the partial calmness condition and studied various sufficient conditions for it in \cite{YZ97}. 
As the partial calmness condition may still be too strong for the single level reformulation, 
Ye and Zhu \cite{YZ} also proposed  the combined approach, which combines the  KKT approach with the value function approach. 
Nevertheless, the partial calmness condition was shown to hold  generically  for the combined program when the upper level variable is one dimension in \cite{KYYZ}. This indicates that the single level program obtained from the combined approach is more likely to satisfy the partial calmness condition.  Recently in \cite{BY}, Bai and Ye showed that even a weaker constraint qualification called the first order sufficient conditions for metric subregularity which is weaker than MFCQ  fails to hold for the bilevel program with equality and inequality constraints. They then derive a directional KKT condition for the single level reformulation of the bilevel program by the value function approach under  the directional calmness condition and proposed the directional quasi-normality as a sufficient condition for the directional calmness condition. Another class of approaches for deriving optimality conditions is based on local reduction methods via nondegenerate local minimizers~\cite{Dempe2009,Jongen2012}. A recent contribution in this direction is the work of Bolte et al.~\cite{BLPV26}, which introduces the Morse–parametric qualification condition under which the bilevel problem admits a mixed-integer nonlinear programming relaxation. 

\subsection{Our new approach}

As discussed above, the first order approach  and the approaches involving the value function have their own advantages and disadvantages. The reformulation involving 
the value function can apply to the  bilevel program with a nonconvex lower level program but the resulting single level reformulation is an optimization problem involving a value function constraint which may be difficult to handle. The first order approach does not have the above difficulties but it requires  convexity of the lower level program. In our new approach we will try to modify the first order approach proposed by Ye and Ye \cite{YY} so that it can be applied to (\ref{BP}) with nonconvex lower level.

To get an insight into the issue and our approach, let us examine the following bilevel program of Mirrlees \cite[Example 1]{M99}:
\begin{equation*}
	\begin{aligned}
		\min &\  F(x, y):=(x-2)^2+(y-1)^2 \\
		\mathrm { s.t. }\quad 
		& y \text { minimizes }  f(x, y)=-x e^{-(y+1)^2}-e^{-(y-1)^2}.
	\end{aligned}
\end{equation*}
\begin{figure}[H]
	\centering
	\includegraphics[width=0.5\linewidth]{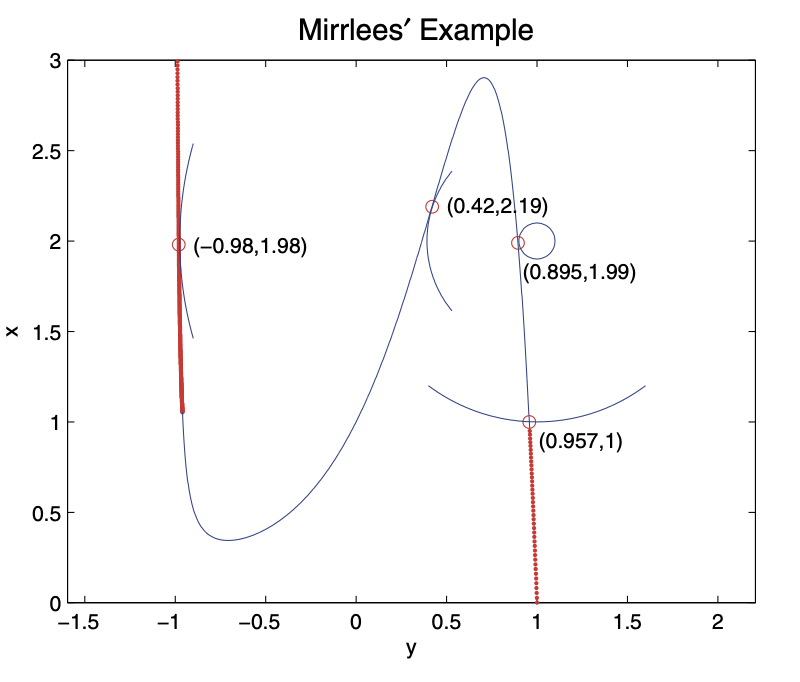}
	\caption{Mirrlees example}
	\label{Figure M}
\end{figure}
In Figure \ref{Figure M}, the blue curve is the graph of the first order condition
$\nabla_y f(x,y)=0$ and 
the graph of the lower level solution mapping $S(x)$ is the red  disconnected curve with a jump at $\bar{x}=1$. Since the feasible region of the bilevel program is the red curve, it is clear that the point $(\bar{x},\bar{y})=(1,0.957)$ is the global optimal solution. However this point is not even a stationary point of the single level program 
which has the blue curve as the constraint region. Take a look at Figure \ref{Figure M}, in the neighborhood of the point $(1,0.957)$, for $x<1$, the blue curve coincides with the red curve but for $x>1$, the blue curve does not coincide with the red curve.
Consequently,  
the first order condition is not a locally equivalent reformulation of the constraint $y\in S(x)$ at the point $(\bar{x},\bar{y})$ since one cannot find a neighborhood around the point $(\bar x,\bar y)$ at which the restriction of the graph of the first order condition is the same as the graph of the solution map.  That is why  the first order approach fails at point $(1,0.957)$.
However from the figure in  the region $(-\infty, 1]\times (0.5, +\infty) $,  the blue curve coincides with the red curve. That is, the  constraint 
\begin{eqnarray*}
	\nabla_y f(x,y)=0, x\leq 1, y\geq 0.5
\end{eqnarray*} and \begin{eqnarray*}
	y\in S(x), x\leq 1, y\geq  0.5
\end{eqnarray*}
are equivalent.
So the global optimal solution of the bilevel program  $(\bar{x},\bar{y})=(1,0.957)$ must be an optimal solution of the following modified  first order reformulation  
\begin{eqnarray*}
	\min && \ F(x,y)\\
	\mathrm { s.t. }  &&\  \nabla_y f(x,y)=0, \ x\leq 1, \ y \geq 0.5.
\end{eqnarray*}

Note that the region $(-\infty, 1]\times [0.5, +\infty) $ includes the set $(-\infty, 1]\times (\bar y-0.4,\bar y+0.4)$ which is a directional neighborhood of  $(1,0.957)$  (see Definition \ref{dn} and the comments after the definition) in direction $(u,v)$ with  $u<0, v\in \mathbb R$. Hence the above modified first order reformulation is a directional first order reformulation at $(\bar x,\bar y)$ in direction $u<0, v\in \mathbb R$.

Motivated by Mirrlees' example, in this paper we propose a directional first order approach for bilevel program (\ref{BP}). Since  $Y$ is a closed and not necessarily convex set, $f$ is smooth and  $ y \in S(x)$,  by the first order necessary optimality condition,  the generalized equation holds
$$0\in \nabla_y f(x,y)+ \widehat{N}_Y(y), $$
where $\widehat N_{Y}$ denotes  the regular normal cone to $Y$ at $y$. Using the graph of the regular normal cone mapping, the generalized equation constraint   is equivalent to the set  constraint
\begin{equation}\label{setc}  \left(y,-\nabla_yf(x,y)\right)\in{\rm gph}\,\widehat N_{Y}.\end{equation} 
The first order approach in terms of the regular normal cone amounts to replace the constraint $y\in S(x)$ by the set constraint (\ref{setc}) and solve the following   set constrained optimization problem: 
\begin{equation}\tag{SCOP}
	\begin{aligned}
		\min_{x,y}\  &F(x,y)\\
		{\rm s.t.}\  & G(x,y)\leq0,\
		\left(y,-\nabla_yf(x,y)\right)\in{\rm gph}\,\widehat N_{Y}.
	\end{aligned} 
\end{equation}
Since the lower level is nonconvex,  problem (SCOP) is not locally equivalent to problem (\ref{BP}).

Motivated by the above discussion, given an optimal solution $(\bar x,\bar y)$  of (\ref{BP}) and a direction $u\in\mathbb R^n$, we consider the following set constrained optimization problem:
\begin{equation}\tag{SCOP\textsubscript{u}}
	\begin{aligned}
		\min_{x,y}\  &F(x,y)\\
		{\rm s.t.}\  & G(x,y)\leq0,\
		\left(y,-\nabla_yf(x,y)\right)\in{\rm gph}\,\widehat N_{Y},\\
		&(x,y)\in(\bar x,\bar y)+{\cal V}_{\varepsilon,\delta}(u)\times\varepsilon\mathbb B,
	\end{aligned} 
\end{equation}
where    ${\cal V}_{\varepsilon,\delta}(u)$ denotes the directional neighborhood of $0$ in direction $u$; see Definition \ref{dn}. When $u=0$, {\rm (SCOP$_{u}$)} recovers the set constrained optimization problem (SCOP) restricted on a classical neighborhood of $(\bar x,\bar y)$. As discussed above, if the lower level problem is nonconvex, the first order reformulation (SCOP) is usually not an equivalent local reformulation to the bilevel program (\ref{BP}). For  {\rm (SCOP$_{u}$)} to be a  directional first order reformulation for (\ref{BP}), we need to impose some conditions. 
Define the lower level stationary solution map as $$S_{\mathrm{FO}}(x):=\{y\in Y \mid 0\in\nabla_yf(x,y)+\widehat N_Y(y)\}.$$ 
For an $x$, $S_{\mathrm{FO}}(x)$ may not be a singleton. But very often for $x\in U$,  a neighborhood of  $\bar x$, there exists $V$, a neighborhood of $\bar y$ so that  $S_{\mathrm{FO}}(x)\cap V$ is single-valued for all $x\in U$. This property is called a single-valued localization. For example from Figure \ref{Figure M}, we can see that the lower level stationary solution map for Mirrlees' problem has a single-valued localization at each point near the global optimal solution. Suppose that the stationary solution map $S_{\mathrm{FO}}(x)$ has a single-valued localization. When will the constraints
$0\in \nabla_y f(x,y)+ \widehat{N}_Y(y)$
and $ y\in S(x)$  coincide on a directional neighborhood? Since any optimal solution must be a stationary point, if the stationary map $S_{\mathrm{FO}}(x)$ has a single-valued localization, then  $S(x)=S_{\mathrm{FO}}(x)$ provided that  $S(x)\cap V $ is nonempty for $x$ in a directional neighborhood of $\bar x$. For example from Figure \ref{Figure M}, $S(x)\cap V$ is nonempty for $x\in (-\infty, 1]$ which is a directional neighborhood of $\bar x$ in direction $u<0$ and  $V=(\bar y-0.4, \bar y+ 0.4 )$ is a directional neighborhood of $\bar y$.
Based on these observations, we have shown that for problem (\ref{BP}), under the condition that $S_{\mathrm{FO}}(x)$ has a single-valued localization and the solution map $S(x)$ is inner semi-continuous at $(\bar x,\bar y)$ in direction $u$, problem   {\rm (SCOP$_{u}$)} is a directional local reformulation to the bilevel program (\ref{BP}). 


A natural question arises: for the bilevel program (\ref{BP}) with a nonconvex lower level, suppose {\rm (SCOP$_{u}$)} is a directional local reformulation   at $(\bar x,\bar y)$ which is a local optimal solution of (\ref{BP}), does the M-stationary condition (\ref{kkt}) with ${N}_Y$ replaced by $\widehat{N}_Y$ still hold under the metric subregularity condition?
In this paper we give this question a positive answer. We show  in Theorem \ref{opt} that if {\rm (SCOP$_{u}$)} is a directional local reformulation to the bilevel program (\ref{BP}) at $(\bar x,\bar y)$ and  there is  $v\in \mathbb R^m$ such that $(u,v)$ lies in the tangent cone of the feasible region of problem (SCOP) and $\nabla F
(\bar x, \bar y)(u,v)=0$, then a directional version of the M-stationary condition holds. In fact the directional M-stationary condition is sharper when $u$ is not equal to zero. 

Our result even gives a new insight into the simpliest case where there are no constraints. For this simple case if the lower level objective function is not convex in variable $y$, 
then a local optimal solution $(\bar x, \bar y)$ may not satisfy the KKT condition of the first order reformulation
$$ \min_{x,y} F(x,y) \quad \mbox{s.t.}\quad\nabla_y f(x,y)=0,$$ under a usual constraint qualification. However according to our directional first order approach, it still satisfies the KKT condition of the first order reformulation under a usual constraint qualification provided that $\nabla_{yy}^2 f(\bar x,\bar y) $ is nonsingular  and the inf-compactness condition holds at $\bar x$,  and either $\bar y$ is the unique point in $
S(\bar x)$  or $\bar y$ is not the unique point in $
S(\bar x)$ but there exists $u \in \mathbb{R}^n$ such that the directional derivative inequality
$$u^T \nabla_xf(\bar x,\bar y)<u^T \nabla_xf(\bar x, y) 
\quad  
\forall y\in S(\bar x)\backslash\{\bar y\},$$ holds and there exists a direction $v$ such that 
$$\nabla \nabla_y f(\bar x,\bar y)(u,v)=0,\quad \nabla F(\bar x,\bar y)(u,v)=0.$$


%

In summary, due to its simplicity, the first order approach has been proven to be useful in both theory and computations. However the validity of the first order approach relies on the convexity of the lower level program. In this paper we  investigate the possibility of utilizing the (directional) first order approach when the lower level is nonconvex.
The main contribution of the paper is as follows:
\begin{itemize}
	\item  For \eqref{BP} without lower level convexity, we propose a local directional characterization by the first order condition for the lower level program,  under the single-valued directional localization of the first order stationary map and the directional inner semi-continuity of the lower level solution map; see Theorem \ref{genthm} and Remarks \ref{R3.1} and \ref{R3.2}.
	\item  We   propose  new and verifiable sufficient conditions for the (directional) inner semi-continuity of the lower level solution map; see Propositions \ref{suffdinc} and \ref{insc}. Since there are very few sufficient conditions for inner semi-continuity of a solution map in the theory of parametric programs,  these conditions are of independent interest.
	\item The principal-agent problem is an important problem in Economics. In this paper we show that the directional first order approach is applicable to a large class of principal-agent problems without the lower-level convexity assumption used in the classical theory; see Section~\ref{PA_application}.
\end{itemize}

\subsection{Organization}
We organize the paper as following. In Section~\ref{pre}, we provide the notations, preliminaries and preliminary results. In Section~\ref{Equivreform} under reasonable conditions, we show that the lower level program can be equivalently characterized by its first order condition over a directional neighborhood. In Section~\ref{firstcondition}, we propose the directional SCOP reformulation for (\ref{BP}) and establish necessary optimality conditions for the directional SCOP reformulation of (\ref{BP}).  Finally, we present an example where the classical first order approach fails while our directional one is applicable.

\section{Preliminaries}\label{pre}

In this section, we review  some basic concepts and results in variational analysis, which will be used later on. For more details see e.g. \cite{BS,DonRock,Long,RW}. 

Denote by $\|\cdot\|$ the norm of Euclidean spaces and by $\langle\cdot,\cdot\rangle$ the inner product of Euclidean spaces.
Let $\Omega$ be a set. By $x^k\xrightarrow{\Omega}\bar{x}$ we mean $x^k\rightarrow\bar{x}$ and $x^k\in \Omega$ for each $k$.  By $x^k\xrightarrow{u}\bar x$ where $u$ is a vector, we mean that the sequence $\{x^k\}$ approaches $\bar x$ in direction $u$, i.e., there exist $t_k\downarrow 0, u^k\rightarrow u$ such that $x^k=\bar x+ t_k u^k$. By $o(t)$, we mean $\lim_{t\rightarrow0}\frac{o(t)}{t}=0$.
We denote by $\mathbb B$, $\bar{\mathbb B}$, $\mathbb S$  the open unit ball, the closed unit ball and 
the unit sphere, respectively. $\mathbb B_\delta(\bar z)$ denotes the open unit ball centered at $\bar z$ with radius $\delta$.
We denote by ${\rm co}\,\Omega$, $\mathrm{cl}\,\Omega$, $\mathrm{span}\, \Omega$ and $\Omega^\circ:=\{y \mid y^Tx\leq0\ \forall x\in \Omega\}$ the convex hull, the closure, the span (the smallest subspace containing $\Omega$) and the polar cone of a set $\Omega$, respectively. The distance from a point $x$ to a set $\Omega$ is denoted by  ${\rm dist}(x,\Omega):=\inf\{\|x-y\|\,|\,y\in\Omega\}$ and the indicator function of set $\Omega$  is denoted by  $\delta_\Omega$. For a single-valued map $\phi:\mathbb R^n\rightarrow\mathbb R^m$, we denote by $\nabla \phi(x)\in \mathbb{R}^{m\times n}$   the Jacobian matrix of $\phi$  at $x$ and for a function $\varphi:\mathbb R^n\rightarrow\mathbb R$, we denote by $\nabla \varphi(x)$ both the gradient and the Jacobian of $\varphi$  at $x$. 
For a set-valued map $\Phi:\mathbb R^n\rightrightarrows\mathbb R^m$ the graph of $\Phi$ is defined by  ${\rm gph}\,\Phi:=\{(x,y) \mid y\in \Phi(x)\}$. 

\begin{defn}[Tangent Cone and Normal Cone](see, e.g., \cite{RW})\label{geometry}
	Given a set $\Omega\subseteq\mathbb R^n$ and a point $\bar{x}\in \Omega$, the contingent cone to $\Omega$ at $\bar{x}$ is defined as
	$$T_\Omega(\bar{x}):=\left \{d\in\mathbb R^n \,|\, \exists t_k\downarrow0, d_k\rightarrow d\ \mbox{s.t.} \ \bar{x}+t_kd_k\in \Omega\   \forall k\right \}.$$
	The Fr\'echet normal cone and the limiting normal cone to $\Omega$ at $\bar{x}$ are defined as
	\begin{align*}
		\widehat{N}_\Omega(\bar{x})&:=\left \{ \zeta\in \mathbb{R}^n \,\Big|\, \langle \zeta ,x-\bar{x}\rangle \leq o(\|x-\bar x\|) \ \forall x\in \Omega \right \},\\
		N_\Omega(\bar{x})&:=\left \{\zeta\in \mathbb{R}^n\,\Big|\, \exists \ x_k\xrightarrow{\Omega}\bar{x},\ \zeta_k{\rightarrow}\zeta\ \mbox{s.t.} \ \zeta_k\in\widehat{N}_\Omega(x_k)\ \forall k\right \},\end{align*}
	respectively. Moreover, $\widehat{N}_\Omega(z):=\emptyset$ if $z\notin \Omega$.
\end{defn}

\begin{defn}[Directional Normal Cone] (\cite[Definition 2]{Gfr13}  or \cite[Definition 2.3]{GM}).\label{dnorm} 
	Given a set $\Omega \subseteq \mathbb{R}^n$, a point $\bar x \in \Omega$ and a direction $d\in\mathbb{R}^n$, the limiting normal cone to $\Omega$ at $\bar{x}$ in direction $d$ is defined by
	\[N_\Omega(\bar{x};d):=\left \{\zeta \in \mathbb{R}^n\,\Big| \,\exists \ t_k\downarrow0, d_k\rightarrow d, \zeta_k\rightarrow\zeta  \mbox{ s.t. } \zeta_k\in \widehat{N}_\Omega(\bar{x}+t_kd_k)\ \forall k \right \}.\]
\end{defn}
It is obvious that $N_{\Omega}(\bar x; 0)=N_{\Omega}(\bar x)$, $N_{\Omega}(\bar x; d)=\emptyset$ if $d \not \in T_\Omega(\bar x)$ and $N_{\Omega}(\bar x;d)\subseteq N_\Omega(\bar x)$. It is also obvious that for all $d\in T_\Omega(\bar x) \setminus T_{bd \Omega}(\bar x)$, one has $N_\Omega(\bar x;d)=\{0\}$.
Moreover when $\Omega$ is convex, by \cite[Theorem 3.2]{Long} the directional and the classical normal cone have the following relationship which extends the corresponding  result in \cite[Lemma 2.1]{Gfr14} that concerns finite dimensional spaces
\begin{equation}
	N_\Omega(\bar x;d)=N_\Omega(\bar x)\cap \{d\}^\perp  \qquad \forall d\in T_\Omega(\bar x).\label{convNormal}
\end{equation}

%

\begin{defn}[Directional Neighborhood]\label{dn} (\cite[formula (7)]{Gfr13}).
	Given a direction $d$ in $\mathbb{R}^n$, and positive numbers $\varepsilon,\delta>0$, the directional neighborhood of {the origin} in direction $d$ is a set defined by 
	\begin{equation*}
		{\cal V}_{\varepsilon,\delta}(d)
		:=\left \{ 
		\begin{array}{ll}
			\{0\} \cup \left \{ z \in \varepsilon\mathbb{B}\setminus \{0\} \,\Big|\,
			\left\|\frac{z}{\|z\|} -\frac{d}{\|d\|} \right\|
			\leq \delta \right \} & \mbox{ if } d\not =0,\\
			\varepsilon\mathbb{B} &  \mbox{ if } d =0.
		\end{array} \right . \label{eqn dneighbor}
	\end{equation*}
	For $\bar z \in \mathbb{R}^n$, we call $\bar z+{\cal V}_{\varepsilon,\delta}(d)$ a directional neighborhood of $\bar z$ in direction $d$.
\end{defn}
It is easy to see that the directional neighborhood of the origin in direction $d=0$ is just the open ball $\varepsilon\mathbb{B}$ and the directional neighborhood of the origin in a nonzero direction $d\neq0$ is a section of the open  ball  $\varepsilon\mathbb{B}$ with the central angle determined by $\delta$; see \cite[Figure 1]{BY23}.


We now give the definition of directional metric subregularity.
\begin{defn}[Directional Metric Subregularity] (\cite[Definition 1]{Gfr13})
	Let $\Phi:\mathbb{R}^n\\ \rightrightarrows\mathbb{R}^m$ be a set-valued map and $(\bar{x},\bar{y})\in {\rm gph}\, \Phi$.
	Given a direction $u\in\mathbb{R}^n$, $\Phi$ is said to be metrically subregular in direction $u$ at $(\bar{x},\bar{y})$, if  there are positive reals $\varepsilon,\delta,$ and $\kappa$ such that
	\begin{equation*}
		{{\rm dist}}(x,\Phi^{-1}(\bar{y}))\leq\kappa {{\rm dist}}(\bar{y}, \Phi(x)) \quad \forall  x\in\bar{x}+{\cal V}_{\varepsilon,\delta}(u).
	\end{equation*}
\end{defn}
If $u=0$ in the above definition, then we say that the set-valued map $\Phi$ is metrically subregular at $(\bar x,\bar y)$. {It is known that the metric subregularity of $\Phi$ at $(\bar x,\bar y)$ is equivalent to the calmness of $\Phi^{-1}$ at $(\bar y,\bar x)$ (see \cite[Theorem 3H.3]{DonRock}). 
	
	The following proposition lists some existing sufficient conditions for the (directional) metric subregularity, hence can be used as constraint qualifications for the directional necessary optimality condition. Interested readers are referred to \cite{Gfr13,Robinson} for more details. We say a set is polyhedral convex if it can be expressed as the solution set of finitely many linear equations and inequalities. And we say a set is polyhedral if it is a union of finitely many polyhedral convex sets.
	\begin{prop}\label{qp} Consider a set constraint system $\phi(z)\in\Omega$, where $\phi(z):\mathbb R^p\rightarrow\mathbb R^s$ is continuously differentiable and $\Omega\subseteq\mathbb R^s$ is locally closed. Let $\phi (\bar z) \in \Omega$ and $d\in \mathbb{R}^p$ with $d$  satisfying   the condition $\nabla \phi(\bar z) d\in T_\Omega (\phi(\bar z))$.
		\begin{itemize}
			\item Suppose that the no nonzero abnormal multiplier constraint qualification (NNAMCQ) holds at $\bar z$, i.e.,
			\begin{equation*}
				0= \nabla\phi(\bar z)^T\zeta \mbox{ and } \zeta\in N_\Omega(\phi(\bar z))   \Longrightarrow \zeta=0.
			\end{equation*}
			Then the set-valued map $-\phi(z)+\Omega$ is metrically subregular at $(\bar z,0)$.
			\item   Suppose that the first order sufficient condition for metric subregularity (FOSCMS)  holds  at $(\bar z,0)$ in direction $d$, i.e., there exists no $\zeta
			\neq 0$ satisfying 
			\begin{equation*}\label{FOSCMScon}
				0= \nabla\phi(\bar z)^T\zeta~\mbox{ and }~	\zeta\in N_\Omega( \phi(\bar z);\nabla\phi(\bar z)d).
			\end{equation*}	
			Then the set-valued map $-\phi(z)+\Omega$ is metrically subregular at $(\bar z,0)$ in direction $d$.
			\item Suppose that the affine+polyhedral condition holds at $\bar z$, i.e., $\phi$ is affine and $\Omega$ is polyhedral.
			Then the set-valued map $-\phi(z)+\Omega$ is metrically subregular at $(\bar z,0)$.
		\end{itemize}
	\end{prop}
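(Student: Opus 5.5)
Since the three items of Proposition \ref{qp} are classical sufficient conditions for (directional) metric subregularity, I will not reprove them from scratch. Instead I will reduce each to a known result, checking only that the hypotheses match. Throughout, write $M(z):=-\phi(z)+\Omega$, so that $M^{-1}(0)=\{z\mid \phi(z)\in\Omega\}$ and ${\rm dist}(0,M(z))={\rm dist}(\phi(z),\Omega)$. Metric subregularity at $(\bar z,0)$ then means the local error bound
\[
{\rm dist}(z,\phi^{-1}(\Omega))\le\kappa\,{\rm dist}(\phi(z),\Omega)
\]
for $z$ near $\bar z$, or for $z$ in a directional neighborhood $\bar z+{\cal V}_{\varepsilon,\delta}(d)$ in the directional case.

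\textbf{Second item (FOSCMS).} I treat this first, since the first item follows from it. I argue by contradiction. If directional subregularity fails, there are $z_k\to\bar z$ in direction $d$ with
\[
{\rm dist}(z_k,\phi^{-1}(\Omega))>k\,{\rm dist}(\phi(z_k),\Omega).
\]
Applying Ekeland's variational principle to $z\mapsto{\rm dist}(\phi(z),\Omega)$ near $z_k$ gives points $\tilde z_k\notin\phi^{-1}(\Omega)$, still approaching $\bar z$ in direction $d$, at which the fuzzy Fermat rule and chain rule hold. These yield $w_k\in\Omega$ with $w_k$ close to $\phi(\tilde z_k)$, and unit vectors $\zeta_k\in\widehat N_\Omega(w_k)$ with $\|\nabla\phi(\tilde z_k)^T\zeta_k\|\to0$.

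The main obstacle is directional bookkeeping. I need to show that $(w_k-\phi(\bar z))/t_k\to\nabla\phi(\bar z)d$ along a suitable $t_k\downarrow0$. This requires $\|w_k-\phi(\tilde z_k)\|={\rm dist}(\phi(\tilde z_k),\Omega)=o(\|\tilde z_k-\bar z\|)$, which comes from the factor $k$ in the failed estimate. A limit $\zeta$ of the $\zeta_k$ then lies in $N_\Omega(\phi(\bar z);\nabla\phi(\bar z)d)$, satisfies $\nabla\phi(\bar z)^T\zeta=0$, and has $\|\zeta\|=1$, contradicting FOSCMS. This is exactly Gfrerer's argument, and I would cite \cite[Corollary 1]{Gfr13} rather than redo the Ekeland step in detail.

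\textbf{First item (NNAMCQ).} By the inclusion $N_\Omega(\phi(\bar z);\nabla\phi(\bar z)d)\subseteq N_\Omega(\phi(\bar z))$, NNAMCQ implies FOSCMS in every direction, in particular $d=0$. Since $N_\Omega(\phi(\bar z);0)=N_\Omega(\phi(\bar z))$ and ${\cal V}_{\varepsilon,\delta}(0)=\varepsilon\mathbb B$, the second item applied with $d=0$ gives metric subregularity at $(\bar z,0)$. Alternatively, NNAMCQ is the Mordukhovich criterion for metric regularity of $M$, which implies subregularity.

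\textbf{Third item (affine+polyhedral).} Here $\phi(z)=Az+b$ and $\Omega=\bigcup_{i=1}^N P_i$ with each $P_i$ polyhedral convex. Then ${\rm gph}\,M=\{(z,w)\mid Az+b+w\in\Omega\}$ is a finite union of polyhedral convex sets, so $M$ is a polyhedral multifunction. Robinson's theorem \cite{Robinson} says polyhedral multifunctions are locally upper Lipschitz, so $M^{-1}$ is calm at $(0,\bar z)$. By the equivalence of calmness of $M^{-1}$ with metric subregularity of $M$ \cite[Theorem 3H.3]{DonRock}, the claim follows.

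Alternatively, one can argue piece by piece. Each convex piece $\{z\mid Az+b\in P_i\}$ satisfies a Hoffman error bound. For $z$ near $\bar z$, only pieces with $\phi(\bar z)\in P_i$ are relevant, since the others lie at positive distance. Taking the maximum of the finitely many Hoffman constants gives a global $\kappa$.
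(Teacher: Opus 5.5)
Your proposal is correct and follows essentially the paper's route. The paper gives no independent proof of Proposition~\ref{qp}; like you, it treats the three items as known results, citing Gfrerer~\cite{Gfr13} for the NNAMCQ/FOSCMS criteria and Robinson~\cite{Robinson} for the affine+polyhedral case. Your added details also hold up:
\begin{itemize}
\item NNAMCQ is exactly FOSCMS with $d=0$, because $N_\Omega(\phi(\bar z);0)=N_\Omega(\phi(\bar z))$ and ${\cal V}_{\varepsilon,\delta}(0)=\varepsilon\mathbb B$.
\item The Ekeland sketch works once the radius is chosen as, e.g., ${\rm dist}(z_k,\phi^{-1}(\Omega))/\sqrt{k}$. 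This keeps $\tilde z_k\notin\phi^{-1}(\Omega)$ and preserves direction $d$.
\item The piecewise Hoffman argument is a valid elementary substitute for Robinson's theorem.
\end{itemize}
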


	Define the linearized cone $L(\bar z):=\{d\in\mathbb R^p \mid \nabla\phi(\bar z)d\in T_\Omega(\bar z)\}$. It is known that $T_{\phi^{-1}(\Omega)}(\bar z)\subseteq L(\bar z)$. Under the directional metric subregularity, these two cones have the following relationship. This result will be used in Section 4.
	\begin{prop}\label{dirAbadie}
		Given any direction $d\in L(\bar z)$. Suppose metric subregularity for $-\phi(z)+\Omega$  holds at $(\bar z,0)$ in direction $d$. Then $d\in T_{\phi^{-1}(\Omega)}(\bar z)$.
	\end{prop}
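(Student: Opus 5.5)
The plan is to combine the directional metric subregularity estimate with the definition of $L(\bar z)$. Fix $d\in L(\bar z)$; if $d=0$ the claim is trivial since $0\in T_{\phi^{-1}(\Omega)}(\bar z)$, so I will assume $d\neq 0$. I read the definition of $L(\bar z)$ as $\nabla\phi(\bar z)d\in T_\Omega(\phi(\bar z))$, which is the condition appearing in Proposition \ref{qp}. By the definition of the contingent cone there are $t_k\downarrow 0$ and $w_k\to \nabla\phi(\bar z)d$ with $\phi(\bar z)+t_kw_k\in\Omega$. I then set $z_k:=\bar z+t_kd$, which lies in $\bar z+{\cal V}_{\varepsilon,\delta}(d)$ for all large $k$, because $z_k-\bar z$ is a positive multiple of $d$ and $\|t_kd\|<\varepsilon$ eventually.

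Next I apply directional metric subregularity of $\Phi(z):=-\phi(z)+\Omega$ at $(\bar z,0)$ in direction $d$. Here $\Phi^{-1}(0)=\phi^{-1}(\Omega)$ and ${\rm dist}(0,\Phi(z))={\rm dist}(\phi(z),\Omega)$, so
\[
{\rm dist}(z_k,\phi^{-1}(\Omega))\le \kappa\,{\rm dist}(\phi(z_k),\Omega)\le \kappa\,\|\phi(\bar z+t_kd)-\phi(\bar z)-t_kw_k\|.
\]
Since $\phi$ is continuously differentiable, $\phi(\bar z+t_kd)=\phi(\bar z)+t_k\nabla\phi(\bar z)d+o(t_k)$. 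The right-hand side is therefore bounded by $\kappa\bigl(t_k\|\nabla\phi(\bar z)d-w_k\|+o(t_k)\bigr)=o(t_k)$.

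Finally, local closedness of $\Omega$ and continuity of $\phi$ make $\phi^{-1}(\Omega)$ locally closed near $\bar z$, so for large $k$ I can pick $\tilde z_k\in\phi^{-1}(\Omega)$ with $\|\tilde z_k-z_k\|\le 2\,{\rm dist}(z_k,\phi^{-1}(\Omega))+t_k^2$; the slack $t_k^2$ covers the case where the distance is zero or the infimum is not attained. Define $d_k:=(\tilde z_k-\bar z)/t_k$. Then $\|d_k-d\|=\|\tilde z_k-z_k\|/t_k\to 0$ and $\bar z+t_kd_k\in\phi^{-1}(\Omega)$, so $d\in T_{\phi^{-1}(\Omega)}(\bar z)$.

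There is no real obstacle. The only care points are checking that the ray points $z_k$ lie in the directional neighborhood, which holds because they are exactly along $d$, and reading the typo $T_\Omega(\bar z)$ as $T_\Omega(\phi(\bar z))$.
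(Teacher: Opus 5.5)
Your proof is correct and is essentially the paper's argument. You apply the directional subregularity estimate at points $\bar z+t_kd$, and use a tangent-cone sequence for $\nabla\phi(\bar z)d\in T_\Omega(\phi(\bar z))$ to get ${\rm dist}(\bar z+t_kd,\phi^{-1}(\Omega))=o(t_k)$, hence $d\in T_{\phi^{-1}(\Omega)}(\bar z)$. Your version is more careful than the paper's, which leaves implicit both the choice of $d^k$ and the step from an $o(t_k)$ distance to tangency; your near-projection $\tilde z_k$ does not even need local closedness, since the slack $t_k^2>0$ already guarantees it exists.
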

	\begin{proof}
	By the assumed directional metric subregularity, there exist $\kappa>0$ such that for any sequences $t_k\downarrow0$ and $d^k\rightarrow d$, one has for sufficiently large $k$
	\begin{align*}
		{\rm dist}(\bar z+t_kd^k,\phi^{-1}(\Omega))&\leq\kappa {\rm dist}(\phi(\bar z+t_kd^k),\Omega)\\
		&=\kappa {\rm dist}(\phi(\bar z)+t_k\nabla \phi(\bar z)d+o(t_k),\Omega). 
	\end{align*}
	Since $d\in L(\bar z)$, by Definition \ref{geometry}, taking appropriate $\{t_k\},\{d^k\}$, we have $\phi(\bar z)+t_k\nabla \phi(\bar z)d+o(t_k)\in \Omega$. Consequently,
	\begin{align*}
		{\rm dist}(\bar z+t_kd^k,\phi^{-1}(\Omega))\leq\kappa {\rm dist}(\phi(\bar z)+t_k\nabla \phi(\bar z)d+o(t_k),\Omega)=o(t_k).
	\end{align*}
	This implies that $d\in T_{\phi^{-1}(\Omega)}(\bar z)$.
	\end{proof}
	From the above proposition, if the metric subregularity for $-\phi(z)+\Omega$  holds at $(\bar z,0)$ in all directions $d\in L(\bar z)$, then $T_{\phi^{-1}(\Omega)}(\bar z)\supseteq L(\bar z)$. Equivalently, the classical metric subregularity implies the so-called Abadie constraint qualification.

	\begin{defn}[Directional Inner Semi-continuity](\cite[Definition 4.4]{BY})\label{incont}
		Let $S:\mathbb{R}^n\rightrightarrows\mathbb{R}^m$ be a set-valued map. 
		Given $\bar y\in S(\bar x)$, we say $S(x)$ is inner semi-continuous at $(\bar x,\bar y)$ in direction $u$  provided that, for any sequences $t_k\downarrow0$ and $u^k\rightarrow u$, there exists a sequence $y^k\in S(\bar x+t_ku^k)$ converging to $\bar y$. When $u=0$ in the above, we say that $S(x)$ is inner semi-continuous at $(\bar x,\bar y)$.
	\end{defn}
	\begin{remark}
		Using the concept of directional neighborhood, the directional inner semi-continuity can be equivalently defined as: there exist positive scalars $\varepsilon,\delta$ such that for any sequence $\{x^k\}\subseteq\bar x+\mathcal V_{\varepsilon,\delta}(u)$ converging to $\bar x$, there exists a sequence $y^k\in S(x^k)$ converging to $\bar y$.
	\end{remark}
	\section{Local  characterization of the lower level program constraint}\label{Equivreform}
	
	This section serves to present a local directional characterization for the constraint $y\in S(x)$ by the first order condition. To this end, first under very weak conditions, we show that the two constraints
	\[
	y\in S(x)~~\mbox{and}~~(y,-\nabla_yf(x,y))\in{\rm gph}\,\widehat{N}_{Y}
	\]
	are equivalent over a directional neighborhood of an optimal solution point. Then we provide various sufficient conditions to verify this equivalence. Finally, we separate our study into two cases and give more detailed discussions.
	\begin{thm}\label{genthm}
		Let $\bar y\in S(\bar x)$ be a solution to lower level program {\rm (P$_x$)} and $u\in\mathbb{R}^n$ be a direction. Suppose there exist positive scalars $\varepsilon_x,\varepsilon_y,\delta$ such that the following two conditions hold,
		\begin{itemize}
			\item[(i)]  $S_{\mathrm{FO}}(x)\cap(\bar y+\varepsilon_y\mathbb B)$ is single-valued over $\bar x+\mathcal{V}_{\varepsilon_x,\delta}(u)$,
			\item[(ii)] $S(x)\cap(\bar y+\varepsilon_y\mathbb B)\neq\emptyset$ over $\bar x+\mathcal{V}_{\varepsilon_x,\delta}(u).$
		\end{itemize}
		Then the stationary map $S_{\mathrm{FO}}$ and solution map $S$ coincide over $\bar x+\mathcal{V}_{\varepsilon_x,\delta}(u)$, that is,
		\begin{equation}\label{equiv}
			S_{\mathrm{FO}}(x)\cap(\bar y+\varepsilon_y\mathbb B)=S(x)\cap(\bar y+\varepsilon_y\mathbb B)\neq\emptyset~~~\forall x\in\bar x+\mathcal{V}_{\varepsilon_x,\delta}(u).
		\end{equation}
		Consequently, the two constraints
		\[
		y\in S(x)~~\mbox{and}~~(y,-\nabla_yf(x,y))\in{\rm gph}\,\widehat{N}_{Y}
		\]
		are equivalent over the directional neighborhood $(\bar x,\bar y)+\mathcal{V}_{\varepsilon_x,\delta}(u)\times\varepsilon_y\mathbb B$. Particularly, in case $u=0$, these two constraints are equivalent on a neighborhood of $(\bar x,\bar y)$ in the classical sense.
	\end{thm}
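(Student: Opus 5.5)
The argument rests on one basic inclusion. For every $x$, the solution set is contained in the stationary set, $S(x)\subseteq S_{\mathrm{FO}}(x)$. Indeed, if $y\in S(x)$, then $y$ minimizes the smooth function $f(x,\cdot)$ over the closed set $Y$. The standard Fermat rule for a differentiable function over a closed set then gives $-\nabla_yf(x,y)\in\widehat N_Y(y)$. To check this directly from Definition \ref{geometry}, note that for $y'\in Y$ near $y$ we have $0\le f(x,y')-f(x,y)=\langle\nabla_yf(x,y),y'-y\rangle+o(\|y'-y\|)$. Hence $\langle-\nabla_yf(x,y),y'-y\rangle\le o(\|y'-y\|)$, which is exactly the Fréchet normal cone condition. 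Intersecting both sides with $\bar y+\varepsilon_y\mathbb B$ gives
\[
S(x)\cap(\bar y+\varepsilon_y\mathbb B)\subseteq S_{\mathrm{FO}}(x)\cap(\bar y+\varepsilon_y\mathbb B)\quad\forall x .
\]

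Next, fix any $x\in\bar x+\mathcal V_{\varepsilon_x,\delta}(u)$. By (ii), the left-hand side contains some point $y_x$. By (i), the right-hand side is a singleton, and by the inclusion it contains $y_x$, so it equals $\{y_x\}$. The left-hand side is a nonempty subset of a singleton, so it also equals $\{y_x\}$. This proves (\ref{equiv}).

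For the consequence, take $(x,y)\in(\bar x,\bar y)+\mathcal V_{\varepsilon_x,\delta}(u)\times\varepsilon_y\mathbb B$. Then $(y,-\nabla_yf(x,y))\in{\rm gph}\,\widehat N_Y$ holds if and only if $y\in S_{\mathrm{FO}}(x)\cap(\bar y+\varepsilon_y\mathbb B)$. Here membership $y\in Y$ is automatic, since $\widehat N_Y(y)=\emptyset$ off $Y$. By (\ref{equiv}), this in turn holds if and only if $y\in S(x)\cap(\bar y+\varepsilon_y\mathbb B)$, so the two constraints are equivalent on the directional neighborhood. When $u=0$, Definition \ref{dn} gives $\mathcal V_{\varepsilon_x,\delta}(0)=\varepsilon_x\mathbb B$, so the directional neighborhood is an ordinary neighborhood of $(\bar x,\bar y)$.

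There is no serious obstacle. The only point needing care is the first step: the Fermat rule must be stated in terms of the regular normal cone for a possibly nonconvex closed $Y$, and the direct verification above settles it.
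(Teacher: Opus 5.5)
Your proof is correct and follows the paper's argument. Both use the Fermat-rule inclusion $S(x)\cap(\bar y+\varepsilon_y\mathbb B)\subseteq S_{\mathrm{FO}}(x)\cap(\bar y+\varepsilon_y\mathbb B)$ and combine the nonemptiness from (ii) with the singleton property from (i). The paper phrases the second step as a contradiction (a stationary $y'\notin S(x')$ would make $S_{\mathrm{FO}}(x')$ non-singleton), while you argue it directly, which is the same reasoning.
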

	\begin{proof}		
	For each $y\in S(x)$, it is well-known that the following stationarity condition holds,
	\[
	0\in\nabla_yf(x,y)+\widehat{N}_Y(y).
	\] 
	Then together with condition (ii), we have    \begin{equation*}
		\emptyset\neq S(x)\cap(\bar y+\varepsilon_y\mathbb B)\subseteq S_{\rm FO}(x)\cap(\bar y+\varepsilon_y\mathbb B)~~\forall x\in\bar x+\mathcal{V}_{\varepsilon_x,\delta}(u).
	\end{equation*}
	
	On the other hand, by condition (i), map $S_{\rm FO}(x)$ has a single-valued localization near $(\bar x,\bar y)$ in direction $u$. Then for any $(x,y)\in(\bar x,\bar y)+\mathcal{V}_{\varepsilon_x,\delta}(u)\times\varepsilon_y\mathbb B$ satisfying that $0\in\nabla_yf(x,y)+\widehat{N}_Y(y)$, we have $y\in S(x)$. Indeed, otherwise there exists some $(x',y')\in (\bar x,\bar y)+\mathcal{V}_{\varepsilon_x,\delta}(u)\times\varepsilon_y\mathbb B$ such that $0\in\nabla_yf(x',y')+\widehat{N}_Y(y')$ and $y'\notin S(x')$. Consequently, $S_{\rm FO}(x')\supseteq S(x')\cup\{y'\}$ is not a singleton, violating condition (i).
	
	In conclusion, we establish the equality in (\ref{equiv}), and therefore, the equivalence between the constraints
	\[
	y\in S(x)~~\mbox{and}~~(y,-\nabla_yf(x,y))\in{\rm gph}\,\widehat{N}_{Y}.
	\]
	\end{proof}
	
	\begin{remark}\label{R3.1}	
		Recall that  $S_{\rm FO}(x)$ has a single-valued Lipschitzian localization near $(\bar x,\bar y)$ if there are neighborhoods $U$ of $\bar x$, $V$ of $\bar y$ and a Lipschitz continuous mapping $S_{loc}: U \rightarrow V$ such that
		$$\{S_{loc}(x)\} =S_{FO}(x) \cap V \quad \forall x\in U.$$
		In this spirit, we can say that condition (i) means that the solution mapping of the generalized equation $0\in\nabla_yf(x,y)+{\widehat N}_Y(y)$ has a single-valued directional localization near $(\bar x,\bar y)$ in direction $u$. 
	\end{remark}
	\begin{remark}\label{R3.2}
		Condition (i) assumed that the solution mapping of the variational inequality $0\in\nabla_yf(x,y)+\widehat{N}_Y(y)$ has a single-valued localization around $(\bar x,\bar y)$. According to \cite{DonRock}, when set $Y$ is polyhedral convex, or $\nabla_yf(x,y)$ admits strong monotonicity, sufficient conditions for the existence of single-valued continuous localization of the variational inequality can be established; see e.g. \cite[Theorems 2E.8 and 2F.7]{DonRock}. 
		Condition (ii) is weaker than the directional inner semi-continuity of $S$ at $(\bar x,\bar y)$ in Definition \ref{incont}.
	\end{remark}

	\subsection{Sufficient conditions for single-valued localization of the first order stationary map}\label{subset3.1}
	
	In the following, we establish verifiable sufficient conditions to obtain results in Theorem \ref{genthm}(i). 
	The following strict graphical derivative for set-valued mappings will be used.
	
	\begin{defn}[Strict Graphical Derivative for a Set-Valued Mapping](\cite[Section 4.4]{DonRock})\label{strctgd}
		For a set-valued mapping $\psi$ : $\mathbb{R}^n \rightrightarrows \mathbb{R}^m$ at $\bar{x}$ for $\bar{y}$, where $\bar{y} \in \psi(\bar{x})$, we define its 
		strict graphical derivative as
		\begin{eqnarray*}
			&&D_* \psi(\bar{x} \mid \bar{y})(u)\\
			&&:=\left\{v\in \mathbb{R}^m\, \left|
			\begin{array}{ll}
				&\exists\, (x^k, y^k) \in \operatorname{gph} \psi,(x^k, y^k) \rightarrow(\bar{x}, \bar{y}), t_k\downarrow 0, (u^k,v^k) \rightarrow(u, v)\\
				& \mathrm{s.t.}\ (x^k+t_ku^k,y^k+t_kv^k) \in \operatorname{gph} \psi 
			\end{array}\right.\right\}.
		\end{eqnarray*}
	\end{defn}

	Define the critical cone to $Y$ with respect to $-\nabla_yf(\bar x,\bar y)$ at $\bar y$ by
	\[
	K_Y(\bar y,-\nabla_yf(\bar x,\bar y)):=T_Y(\bar y)\cap\{\nabla_yf(\bar x,\bar y)\}^\perp.
	\]

	\begin{thm}\label{IF} Let $\bar y\in S_{\rm FO}(\bar x)$ and $f$ is twice continuously differentiable at $(\bar x,\bar y)$.  Then under one of the following conditions, $S_{\rm FO}$ has a Lipschitz single-valued localization near $(\bar x,\bar y)$:
		\begin{itemize}
			\item [(i)] $Y$ is locally closed, $\operatorname{gph}{\widehat N}_Y$ is locally closed, 
			$S_{\rm FO}(x)$ is inner semi-continuous at $(\bar x,\bar y)$,
			and  $w\in \mathbb{R}^m$,
			\begin{equation}\label{smrnew}
				(w, -\nabla_{yy}^2 f(\bar x,\bar y)(w))\in \operatorname{gph}D_{*}{\widehat N}_{Y}(\bar{y} \mid -\nabla_yf(\bar x,\bar y))\Rightarrow\ w=0
			\end{equation}
			holds. 
			\item[(ii)] $\bar y\in \operatorname{int}Y$ and $\nabla_{yy}^2 f(\bar x,\bar y)$ is nonsingular.
			\item[(iii)] $Y$ is nonempty closed convex and the following condition holds: there exists $\mu>0$ such that 
			$$\langle w,\nabla_{yy}^2 f(\bar x,\bar y)w\rangle \geq \mu \|w\|^2 \qquad \forall w \in Y-Y.$$
			\item[(iv)] $Y$ is polyhedral convex and, 
			\begin{itemize}
				\item[(a)] either the second order sufficient optimality condition holds:
				$$\langle w,\nabla_{yy}^2 f(\bar x,\bar y)w\rangle >0 \quad \forall 0\not =w \in \{Y-\bar y\} \cap \{\nabla_y f(\bar x,\bar y)\}^\perp,$$
				\item[(b)] or 
				$S_{\rm FO}(x)$ is inner semi-continuous at $(\bar x,\bar y)$, and for any $w \in \{Y-\bar y\} \cap \{\nabla_y f(\bar x,\bar y)\}^\perp$,
				\begin{eqnarray*}
					\left\{
					\begin{array}{ll}
						-\nabla_{yy}^2 f(\bar x,\bar y)w\in {\rm cl}\, (\{Y-\bar y\}^\circ+\mathrm{span}\,\{\nabla_yf(\bar x,\bar y)\}),\\	
						\langle w,-\nabla_{yy}^2 f(\bar x,\bar y)w\rangle=0
					\end{array}
					\right.
					\implies w=0.
				\end{eqnarray*}
			\end{itemize}
		\end{itemize}
	\end{thm}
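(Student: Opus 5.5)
The plan is to view $S_{\rm FO}$ as the solution map of the parametric generalized equation $0\in \nabla_y f(x,y)+\widehat N_Y(y)$. Equivalently,
$$\operatorname{gph} S_{\rm FO}=\{(x,y)\mid \Phi(x,y)\in \operatorname{gph}\widehat N_Y\},\qquad \Phi(x,y):=(y,-\nabla_yf(x,y)),$$
and I would treat (i) as the master criterion, to which (ii) and (iv)(b) reduce, while (iii) and (iv)(a) come from classical results in \cite{DonRock}.

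\emph{Case (i).} I will use the strict-derivative criterion for a single-valued Lipschitz localization in \cite[Section 4.4]{DonRock} (Theorem 4D.1 there). It says that if $\operatorname{gph} S_{\rm FO}$ is locally closed at $(\bar x,\bar y)$ and $D_*S_{\rm FO}(\bar x\mid\bar y)(0)=\{0\}$, then near $(\bar x,\bar y)$ the map $S_{\rm FO}$ has at most one value and is Lipschitz on its domain. Local closedness follows because $\Phi$ is continuous and $\operatorname{gph}\widehat N_Y$ is locally closed. Nonemptiness of the localization on a neighborhood of $\bar x$ is exactly the assumed inner semi-continuity of $S_{\rm FO}$ at $(\bar x,\bar y)$.

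The key computation is then the following. Take $v\in D_*S_{\rm FO}(\bar x\mid\bar y)(0)$, with $(x^k,y^k)\to(\bar x,\bar y)$, $t_k\downarrow0$, $(u^k,v^k)\to(0,v)$, and both $(x^k,y^k)$ and $(x^k+t_ku^k,y^k+t_kv^k)$ in $\operatorname{gph}S_{\rm FO}$. Then both $\Phi(x^k,y^k)$ and $\Phi(x^k+t_ku^k,y^k+t_kv^k)$ lie in $\operatorname{gph}\widehat N_Y$. Since $\nabla_y f$ is $C^1$, a mean-value estimate that is uniform near $(\bar x,\bar y)$ gives
$$\nabla_yf(x^k+t_ku^k,y^k+t_kv^k)-\nabla_yf(x^k,y^k)=t_k\big(\nabla^2_{yy}f(\bar x,\bar y)v+o(1)\big).$$
Hence $(v,-\nabla^2_{yy}f(\bar x,\bar y)v)\in \operatorname{gph}D_*\widehat N_Y(\bar y\mid-\nabla_yf(\bar x,\bar y))$, and \eqref{smrnew} forces $v=0$. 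This step is routine once the uniform Taylor remainder is written out.

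\emph{Case (ii).} Here $\widehat N_Y(y)=\{0\}$ for $y$ near $\bar y$, so locally $S_{\rm FO}$ is the solution set of $\nabla_yf(x,y)=0$. The classical implicit function theorem gives a $C^1$, hence Lipschitz, localization.

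\emph{Case (iii).} With $Y$ closed convex, $\widehat N_Y=N_Y$. Strong monotonicity of the linearization $w\mapsto\nabla^2_{yy}f(\bar x,\bar y)w$ on $Y-Y$ yields strong regularity of the variational inequality (Robinson), and the parametric version in \cite[Theorem 2F.7]{DonRock} gives the Lipschitz localization, including existence of a solution for each nearby $x$.

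\emph{Case (iv).} With $Y$ polyhedral convex, I will use the reduction lemma $\operatorname{gph}N_Y-(\bar y,\bar y^*)=\operatorname{gph}N_K$ near the origin, where $\bar y^*=-\nabla_yf(\bar x,\bar y)$ and $K=T_Y(\bar y)\cap\{\nabla_yf(\bar x,\bar y)\}^\perp$; here $T_Y(\bar y)$ is locally $Y-\bar y$ by polyhedrality. For (a), the positive definiteness on the critical cone is the condition in \cite[Theorem 2E.8]{DonRock} (in the spirit of Robinson's critical-face criterion), which supplies the localization directly. For (b), the strict derivative of $N_Y$ at the reference pair is contained in a union over pairs of closed faces of $K$, so I will show that $(w,-\nabla^2_{yy}f\,w)\in\operatorname{gph}D_*N_Y$ implies two things: $w$ lies in the cone $\{Y-\bar y\}\cap\{\nabla_yf\}^\perp$ (or a face difference of it), and $-\nabla^2_{yy}f\,w\in{\rm cl}(\{Y-\bar y\}^\circ+\mathrm{span}\{\nabla_yf\})$ with $\langle w,-\nabla^2_{yy}fw\rangle=0$, the last from complementarity on faces. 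The hypothesis then gives $w=0$, and (i) applies together with the assumed inner semi-continuity.

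\textbf{Main obstacle.} The step I expect to be hardest is this face-by-face description of $D_*N_K(0\mid0)$. The strict derivative involves differences of two graph points rather than one, so the orthogonality $\langle w,\cdot\rangle=0$ and the membership of $w$ in $\{Y-\bar y\}\cap\{\nabla_yf\}^\perp$ have to be checked carefully. I would first establish the formula for $K$ polyhedral, where $\operatorname{gph}N_K$ is a finite union of polyhedral convex sets, and then pass to limits.
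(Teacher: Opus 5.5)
Your arguments for (i)–(iii) are correct and follow the paper's route: the strict-derivative criterion with inner semicontinuity supplying nonemptiness, the implicit function theorem, and \cite[Theorem 2F.7]{DonRock}. Your direct check that \eqref{smrnew} forces $D_*S_{\rm FO}(\bar x\mid\bar y)(0)=\{0\}$ is just a more explicit version of the paper's appeal to \cite[Corollary 4D.2]{DonRock}.

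The genuine gap is (iv), and it cannot be closed, because (iv)(a) and (iv)(b) are false as stated. Take $n=m=2$, $Y=\mathbb R^2_+$, $f(x,y)=\frac12 y^TAy+x^Ty$ with $A=\begin{pmatrix}1&2\\2&1\end{pmatrix}$, and $(\bar x,\bar y)=(0,0)$.
\begin{itemize}
\item Here $\nabla_yf(\bar x,\bar y)=0$ and $K=\mathbb R^2_+$. Since $w^TAw=w_1^2+4w_1w_2+w_2^2>0$ on $\mathbb R^2_+\setminus\{0\}$, hypothesis (a) holds.
\item The implication in (b) also holds, because $w\geq0$ and $w^TAw=0$ force $w=0$.
\item $S_{\rm FO}$ is inner semicontinuous at $(0,0)$. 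A minimizer exists by coercivity on $Y$. Any $y\in S_{\rm FO}(x)$ satisfies $y^TAy=-x^Ty$, hence $\|y\|\le\|x\|/\alpha$ with $\alpha:=\min\{w^TAw\mid w\geq0,\ \|w\|=1\}>0$.
\item Yet for $x=(-t,-t)$, $t>0$, the points $(t/3,t/3)$, $(t,0)$, $(0,t)$ all lie in $S_{\rm FO}(x)$. So there is no single-valued localization; $A$ is not a P-matrix.
\end{itemize}

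This shows exactly where each of your steps breaks.

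\emph{Part (a).} \cite[Theorem 2E.8]{DonRock} needs the reduced inequality $Aw-u+N_K(w)\ni0$ to be uniquely solvable for every $u$. Positivity of $A=\nabla^2_{yy}f(\bar x,\bar y)$ on $K$ does not give this: for $u=(1,1)$ there are three solutions. The paper's proof slips at the same place. Its condition \eqref{2cond} asks for positivity on a subset of $K^+=\operatorname{span}K\supseteq K$, so it is not implied by positivity on $K$. What does work is positive definiteness on $K-K$.

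\emph{Part (b).} The face description you plan to establish is false. Via the reduction lemma, $\operatorname{gph}D_*N_Y(\bar y\mid-\nabla_yf(\bar x,\bar y))$ is the paratingent cone at the origin of $\operatorname{gph}N_K=\bigcup_F F\times(K^\circ\cap F^\perp)$. That cone is $\bigcup_{F,G}\bigl[F\times(K^\circ\cap F^\perp)-G\times(K^\circ\cap G^\perp)\bigr]$ over closed faces $F,G$ of $K$. Differences taken across two different faces produce pairs $(w,\zeta)$ with $w\notin K$ and $\langle w,\zeta\rangle\neq0$. Concretely, $((t,0),(0,-t))-((t/3,t/3),(0,0))=\frac t3\bigl((2,-1),(0,-3)\bigr)$, and $(0,-3)=-A(2,-1)$. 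So \eqref{smrnew} fails even though the hypothesis of (iv)(b) holds.

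The paper's chain breaks at the corresponding point. The reduction of \cite[Lemma 2E.4]{DonRock} at the moving base points $(y^k,\xi^k)$ is valid only on neighborhoods that may shrink faster than $t_k$. So one cannot pass to $N_{K_k}$ at scale $t_k$ and then to $N_K(w)$. A correct version of (iv)(b) must impose the implication on this whole union of face-difference pieces (a critical-face-type condition), not merely $-Aw\in N_K(w)\Rightarrow w=0$.
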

	\begin{proof}
	(i)  From the definition of strict graphical derivative, the set-valued map $\nabla_{yy}^2 f(\bar x,\bar y)(w)+D_{*}{\widehat N}_{Y}(\bar{y} \mid -\nabla_yf(\bar x,\bar y))(w)$ is positively homogeneous, hence condition (6) in \cite[Corollary 4D.2]{DonRock} is equivalent to the condition that, there exists no $w\neq0$ such that
	\[
	0\in\nabla_{yy}^2 f(\bar x,\bar y)(w)+D_{*}{\widehat N}_{Y}(\bar{y} \mid -\nabla_yf(\bar x,\bar y))(w),
	\]
	which is equivalent to condition (\ref{smrnew}) in this theorem.
	
	(ii) In case $\bar y\in \operatorname{int}Y$, $S\mathrm{_{FO}}(x):=\{y \mid \nabla_yf(x,y)=0\}$. Since $\nabla_{yy}^2 f(\bar x,\bar y)$ is nonsingular, the existence of Lipschitz continuous single-valued localization of $S\mathrm{_{FO}}$ is well-known from implicit function theorem.
	
	(iii) The conclusion follows from \cite[Theorem 2F.7]{DonRock}.
	
	(iv-a) Denote by 
	$$
	A:=\nabla^2_{yy}f(\bar x,\bar y)~\mbox{and}~K:=K_Y(\bar y;-\nabla_yf(\bar x,\bar y)).
	$$
	According to \cite[Theorem 2E.8]{DonRock}, if for each $u\in\mathbb R^m$ the auxiliary variational inequality $Aw-u+N_K(w)\ni 0$ has a unique solution, then $S_{\rm FO}$ has a Lipschitz continuous single-valued localization around $(\bar x,\bar y)$. 
	
	Consider the affine-polyhedral variational inequality $Aw-u+N_K(w)\ni 0$. Denote by
	$$
	K^+:=\operatorname{span}K~\mbox{and}~K^-:=\operatorname{lin}K,
	$$ the   smallest/largest subspace including/included in $K$. By \cite[Theorem 2E.6]{DonRock}, if the following condition holds,
	\begin{equation}\label{2cond}
		w\in K^+, ~ \nabla^2_{yy}f(\bar x,\bar y)w\perp K^-, ~ \langle w,\nabla^2_{yy}f(\bar x,\bar y)w\rangle\leq0~\implies~w=0,
	\end{equation} then variational inequality $Aw-u+N_K(w)\ni 0$ has a unique solution for any $u$, hence $S_{\rm FO}$ has a Lipschitz continuous single-valued localization around $(\bar x,\bar y)$. 
	
	By definition, since $K^-\subseteq K\subseteq K^+$, condition (\ref{2cond}) can be implied by the following second order sufficient optimality condition
	\[
	\langle w,\nabla^2_{yy}f(\bar x,\bar y)w\rangle>0\quad
	\forall w\in K\backslash\{0\}.
	\]
	
	Since $Y$ is a polyhedral convex set, $T_Y(\bar y)=\operatorname{cone}\{Y-\bar y\}$. Then we have
	$K=\operatorname{cone}\{Y-\bar y\}\cap\{-\nabla_yf(\bar x,\bar y)\}^\perp$. Hence, the proof is complete.
	
	(iv-b) 	In case $Y$ is polyhedral convex,  condition (\ref{smrnew}) in condition (i) is equivalent to 
	\[
	-\nabla_{yy}^2 f(\bar x,\bar y)w\in D_*N_Y(\bar y \mid -\nabla_yf(\bar x,\bar y))(w) \implies w=0,
	\]
	which by Definition \ref{strctgd}, is equivalent to 
	\begin{align*}
		\left.	
		\begin{aligned}
			\exists (y^k,\xi^k)\rightarrow(\bar y,-\nabla_yf(\bar x,\bar y)), (w^k,v^k)\rightarrow (w,-\nabla_{yy}^2 f(\bar x,\bar y)w),\\  t_k\downarrow0,\ \mbox{s.t.}\ \xi^k+t_kv^k\in N_Y(y^k+t_kw^k)
		\end{aligned}
		\right\}
		\implies w=0,
	\end{align*}
	which by formula (11) in
	\cite[Lemma 2E.4]{DonRock}, is equivalent to 
	\begin{align*}
		\left.	
		\begin{aligned}
			\exists (y^k,\xi^k)\rightarrow(\bar y,-\nabla_yf(\bar x,\bar y)),  (w^k,v^k)\rightarrow (w,-\nabla_{yy}^2 f(\bar x,\bar y)w),\\ t_k\downarrow0,\ \mbox{s.t.}\ t_kv^k\in N_{K_k}(t_kw^k), K_k=\{d\in T_Y( y^k) \mid d\perp\xi^k\}
		\end{aligned}
		\right\}
		\implies w=0,
	\end{align*}
	which by convexity of $Y$ hence cone $K_k$, is equivalent to 
	\begin{align*}
		\left.	
		\begin{aligned}
			\exists (y^k,\xi^k)\rightarrow(\bar y,-\nabla_yf(\bar x,\bar y)),  (w^k,v^k)\rightarrow (w,-\nabla_{yy}^2 f(\bar x,\bar y)w),\\ t_k\downarrow0,\ \mbox{s.t.}\ v^k\in N_{K_k}(w^k), K_k=\{d\in T_Y( y^k)|d\perp\xi^k\}
		\end{aligned}
		\right\}
		\implies w=0,
	\end{align*}
	which by the inner/outer semi-continuity of normal/tangent cone mapping $N_Y$ and $T_Y$ to polyhedral convex sets, respectively, can be implied by, hence is equivalent to  
	\begin{equation}\label{poly}
		-\nabla_{yy}^2 f(\bar x,\bar y)w\in N_K(w)\implies w=0,
	\end{equation}
	where for simplicity, $K:=K_Y(\bar y,-\nabla_yf(\bar x,\bar y))$ denotes the critical cone to $Y$ for $-\nabla_yf(\bar x,\bar y)$ at $\bar y$. 
	Since $K$ is a closed convex cone, condition (\ref{poly}) is equivalent to that
	\[
	-\nabla_{yy}^2 f(\bar x,\bar y)w\in K^o, ~ w\in K, ~	\langle w,-\nabla_{yy}^2 f(\bar x,\bar y)w\rangle=0\implies w=0,
	\]
	Since $Y$ is a polyhedral convex set, 
	$K=\operatorname{cone}\{Y-\bar y\}\cap\{-\nabla_yf(\bar x,\bar y)\}^\perp$. Then $K^\circ={\rm cl}\, (\{Y-\bar y\}^\circ+{\rm span}\,\{\nabla_yf(\bar x,\bar y)\})$. Hence, the proof is complete.
	\end{proof}


	\begin{remark}
		Obviously, if $\bar y\in S_{\rm FO}(\bar x)$ and $S_{\rm FO}(\bar x)$ has a Lipschitz single-valued localization near $(\bar x,\bar y)$,
		$S_{\rm FO}(x)$ is automatically inner semi-continuous at $(\bar x,\bar y)$. 
		Then one can easily show that the conditions in Theorem \ref{IF} (iv-b) are weaker than 
		the second order sufficient optimality condition
		\[
		\langle w,\nabla_{yy}^2 f(\bar x,\bar y)w\rangle>0~\mbox{for all}~0\neq w\in \{Y-\bar y\} \cap \{\nabla_y f(\bar x,\bar y)\}^\perp.
		\]
	\end{remark}

	
	In principal-agent problems, one always has box constraints $Y=\mathrm{\Pi}_{i=1}^p[a_i,b_i]$. According to the above discussion, in this case, Theorem \ref{IF} has the following form.
	\begin{thm} \label{IF2}
		Suppose $Y=\Pi_{i=1}^p[a_i,b_i]\subseteq\mathbb R^p$. Consider the stationary solution mapping $S_{\rm FO}$, and a pair $(\bar{x}, \bar{y})$ with $\bar{y} \in S_{\rm FO}(\bar{x})$. Suppose that $f$ is continuously differentiable, then 
		\begin{align*}
			&K_Y(\bar y,-\nabla_yf(\bar x,\bar y))\\
			=&\left\{v\in\mathbb R^m\ \left|\ 
			\begin{aligned}
				v_i=0,\quad &\mbox{if}~(\nabla_yf(\bar x,\bar y))_i\neq0,\\
				v_i\in	[0,+\infty),~&\mbox{if}~(\nabla_yf(\bar x,\bar y))_i=0~\mbox{and}~y_i=a_i,\\
				v_i\in	(-\infty,0],~&\mbox{if}~(\nabla_yf(\bar x,\bar y))_i=0~\mbox{and}~y_i=b_i
			\end{aligned}\right.\right\}.
		\end{align*}
		Suppose  one of the following two conditions holds:
		\begin{itemize}
			\item[(a)] the second order sufficient optimality condition holds:
			$$\langle w,\nabla_{yy}^2 f(\bar x,\bar y)w\rangle >0 \quad \forall 0\not =w \in K_Y(\bar y,-\nabla_yf(\bar x,\bar y)),$$
			\item[(b)] the critical cone $K_Y(\bar y,-\nabla_yf(\bar x,\bar y))=\{0\}$.
		\end{itemize}
		Then $S_{\rm FO}$ has a Lipschitz continuous single-valued localization around $(\bar{x},\bar{y})$. 
	\end{thm}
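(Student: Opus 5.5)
The plan is to treat Theorem \ref{IF2} as a specialization of Theorem \ref{IF}(iv): a box $Y=\Pi_{i=1}^p[a_i,b_i]$ is polyhedral convex. Two things need to be done: compute the critical cone explicitly, and then check that conditions (a) and (b) imply the hypotheses of Theorem \ref{IF}(iv-a). (We read $p=m$ throughout and assume $f$ is twice continuously differentiable, as in the standing assumptions, so that $\nabla_{yy}^2 f$ makes sense.)

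\emph{Step 1: computing $K_Y$.} The tangent cone of a product of intervals is the product of the interval tangent cones. Hence $T_Y(\bar y)=\Pi_i T_i$, where $T_i=\mathbb R$ if $a_i<\bar y_i<b_i$, $T_i=[0,\infty)$ if $\bar y_i=a_i$, and $T_i=(-\infty,0]$ if $\bar y_i=b_i$. (Degenerate intervals with $a_i=b_i$ give $T_i=\{0\}$, which is consistent with the formula.)

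Next we use stationarity, $0\in\nabla_yf(\bar x,\bar y)+N_Y(\bar y)$, with $N_Y(\bar y)=\Pi_i N_{[a_i,b_i]}(\bar y_i)$. Componentwise this gives the following.
\begin{itemize}
\item $g_i:=(\nabla_yf(\bar x,\bar y))_i=0$ at interior indices.
\item $g_i\ge 0$ if $\bar y_i=a_i$.
\item $g_i\le 0$ if $\bar y_i=b_i$.
\end{itemize}
Therefore, for $v\in T_Y(\bar y)$, every product $g_iv_i$ has the same sign (nonnegative). It follows that $\langle g,v\rangle=0$ forces $g_iv_i=0$ for each $i$. So $v_i=0$ whenever $g_i\ne0$, and otherwise $v_i$ ranges over $T_i$. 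This is exactly the displayed description of $K_Y$: interior indices automatically have $g_i=0$ and contribute free components, and the formula is read that way.

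\emph{Step 2: condition (a).} Since $Y$ is polyhedral convex, $T_Y(\bar y)=\operatorname{cone}(Y-\bar y)$, and so $K_Y$ is the cone generated by $\{Y-\bar y\}\cap\{\nabla_yf(\bar x,\bar y)\}^\perp$. Positivity of the quadratic form on the cone minus the origin is equivalent to positivity on the generating set minus the origin, by positive homogeneity. Thus (a) is precisely the second order condition in Theorem \ref{IF}(iv-a), and that result gives the Lipschitz single-valued localization.

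\emph{Step 3: condition (b).} If $K_Y=\{0\}$, then the condition in (a) holds vacuously, since there is no nonzero $w$ to test. So (b) reduces to (a). Alternatively, the proof of Theorem \ref{IF}(iv-a) applies directly: with $K=\{0\}$, the auxiliary variational inequality $Aw-u+N_K(w)\ni0$ has the unique solution $w=0$ for every $u$, so \cite[Theorem 2E.8]{DonRock} applies.

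The only genuinely delicate point is the sign argument in Step 1 that makes the critical cone split componentwise; everything else is bookkeeping. That argument is settled by noting that all the summands $g_iv_i$ share a sign, so a zero sum forces each one to vanish.
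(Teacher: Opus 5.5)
\textbf{There is a genuine gap, and it comes from the paper.} Your route is the paper's route. Your Step 1 correctly proves the formula for $K_Y$, which the paper states without proof. Your direct argument for (b), using \cite[Theorem 2E.8]{DonRock} with $K=\{0\}$, is also correct and stands on its own.

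The gap is in Step 2. You cannot use Theorem \ref{IF}(iv-a) as a black box, because it is false, and so is part (a) of the statement. In the proof of Theorem \ref{IF}(iv-a), condition (\ref{2cond}) quantifies over $w\in K^+=\mathrm{span}\,K$, not over $K$. So $K^-\subseteq K\subseteq K^+$ does not let positivity on $K$ imply (\ref{2cond}).

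Here is a concrete counterexample. Take $n=m=2$, $Y=[0,1]^2$, $f(x,y)=\tfrac12\langle y,Ay\rangle-\langle x,y\rangle$ with $A=\begin{pmatrix}1&2\\2&1\end{pmatrix}$, and $(\bar x,\bar y)=(0,0)$.
\begin{itemize}
\item We have $\nabla_yf(\bar x,\bar y)=0$ and $K_Y=\mathbb R^2_+$.
\item For $0\neq w\geq 0$, $\langle w,Aw\rangle=w_1^2+4w_1w_2+w_2^2>0$, so (a) holds.
\item Fix $x=(t,t)$ with $0<t<\tfrac12$. At $(t,0)$ one has $\nabla_yf=(0,t)$, and $(0,-t)\in\{0\}\times(-\infty,0]=N_Y(t,0)$, so $(t,0)\in S_{\rm FO}(x)$.
\item The point $(0,t)$ lies in $S_{\rm FO}(x)$ by symmetry.
\item The point $(t/3,t/3)\in\operatorname{int}Y$ satisfies $\nabla_yf=0$, so it also lies in $S_{\rm FO}(x)$.
\end{itemize}
These three points all tend to $\bar y$ as $t\downarrow 0$. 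Hence $S_{\rm FO}(x)\cap V$ is not a singleton for any neighborhood $V$ of $\bar y$, and no single-valued localization exists.

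In complementarity terms: positivity on $K=\mathbb R^2_+$ only makes $A$ strictly copositive. Unique solvability of $Aw-u+N_K(w)\ni 0$ for every $u$ requires $A$ to be a P-matrix, and here $\det A=-3<0$. So the step you called bookkeeping is where the argument fails. Your sign computation in Step 1 is not the delicate part.

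\textbf{What survives.} Part (b) is fine as you proved it. For (a), you need the strong version:
\[
\langle w,\nabla^2_{yy}f(\bar x,\bar y)w\rangle>0 \quad \text{for all } 0\neq w\in\mathrm{span}\,K_Y=\{w\mid w_i=0 \text{ if } (\nabla_yf(\bar x,\bar y))_i\neq0 \text{ or } a_i=b_i\}.
\]
This implies (\ref{2cond}) directly, and the argument of Theorem \ref{IF}(iv-a) then goes through.

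Alternatively, you can keep (a) as stated but assume at most one index $i$ has $(\nabla_yf(\bar x,\bar y))_i=0$ and $\bar y_i\in\{a_i,b_i\}$ with $a_i<b_i$. Then $K_Y\cup(-K_Y)=\mathrm{span}\,K_Y$, and since the quadratic form is even, (a) upgrades to the strong version. This covers the scalar case $Y=[a,b]$ used in Section \ref{PA_application}.
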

	\begin{proof}
	Since $Y$ is closed and polyhedral convex, and $\{Y-\bar y\}\cap\{\nabla_yf(\bar x,\bar y)\}^\perp=K_Y(\bar y,-\nabla_yf(\bar x,\bar y))$, conditions (a) and (b) both implies condition (iv-a) in Theorem \ref{IF}. Consequently, condition Theorem \ref{IF} (iv-b) also holds.
	\end{proof}

	Now consider the case where $Y=\{y\in\mathbb R^m \mid  g(y) \leq 0\}$ is the solution set of  inequalities, where $ g:\mathbb R^m\rightarrow\mathbb R^p$ is twice continuously differentiable. By \cite[Theorem 6.14]{RW}, for any $y\in Y$, 
	$$\widehat{N}_Y( y)\supseteq \{\nabla g(y)^\top \lambda  |0\leq \lambda \perp g(y)\leq 0\}.$$
	If MSCQ holds, then 
	$${N}_Y( y)\subseteq \{\nabla g(y)^\top \lambda  |0\leq \lambda \perp g(y)\leq 0\}.$$
	It follows that under MSCQ, we have
	\begin{equation}\label{regularity}\widehat{N}_Y( y)={N}_Y( y)= \{\nabla g(y)^\top \lambda  |0\leq \lambda \perp g(y)\leq 0\}.
	\end{equation}
	Therefore, under MSCQ at $ y$, 
	\begin{eqnarray*}
		S_{\rm FO}(x)&=&\{ y| 0=\nabla_y f(x,y)+\nabla g(y)^\top \lambda  |0\leq \lambda \perp g(y)\leq 0\}.
	\end{eqnarray*} 
	Therefore, there  is $V$, a neighborhood of $\bar y$, such that 
	$y\in S_{\rm FO}(x)\cap V$ if and only if there exists a multiplier $\lambda$ such that the  KKT condition holds:
	\begin{align*}
		\nabla_yf(x,y)+\nabla g(y)^T\lambda &=0,\\
		0\leq \lambda \perp g(y)&\leq 0.
	\end{align*}
	By \cite[Theorem 2.2]{HOS}, we have the following localization criterion for the case where the lower level constraints are defined by smooth inequalities. 
	\begin{thm}[Localization Criterion for KKT solution map]\label{IF3}
		Consider the case where $Y=\{y\in\mathbb R^m \mid  g(y) \leq0\}$ and $ g:\mathbb R^m\rightarrow\mathbb R^p$  is twice continuously differentiable. Let $\bar y\in S_{\rm FO}(\bar x)$.	Suppose the following conditions hold at $(\bar x,\bar y)$,
		\begin{itemize}
			\item[(1)]{the MFCQ which is equivalent to NNAMCQ:
				\[
				\nabla g(\bar y)^T\mu=0,~\langle g(\bar y),\mu\rangle=0,~ \mu\geq0\implies \mu=0,
				\] 
				and CRCQ, i.e., there exists some $\varepsilon>0$ such that for any subsets $I\subseteq I_g(\bar y):=\{j=1,\ldots,p \mid g_j(\bar y)=0\}$, the family of gradients $\{\nabla g_j(y)\mid j\in I\}$ has the same rank over the neighborhood $\bar y+\varepsilon\mathbb B$.}

			\item[(2)] for each multiplier $\bar\lambda$   the strong second order sufficient condition holds for $(\bar x,\bar y,\bar\lambda)$:
			\begin{align*}
				\langle y',\nabla^2_{yy}L(\bar x,\bar y,\bar\lambda)y'\rangle>0 \quad \mbox{for all}~ y'\neq0~\mbox{in the subspace}\\
				M=\{y' \mid \nabla g_j(\bar y)y'=0, i=1,\ldots,p, j\in I_{0+}(\bar y,\bar \lambda)\},
			\end{align*}
			where $L(x,y,\lambda):=f(x,y)+\lambda^Tg(y)$ denotes the lower level Lagrange function and $I_{0+}(y,\lambda):=\{j=1,\ldots,q \mid g_j(y)=0,\lambda_i>0\}$. 
		\end{itemize} 
		Then the stationary solution map $S_{\rm FO}$ has a Lipschitz continuous single-valued localization around $\bar x$ for $\bar y$.
	\end{thm}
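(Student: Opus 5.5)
The plan is to reduce the statement to the stability theory of KKT systems in \cite[Theorem 2.2]{HOS}, after first checking that, near $\bar y$, $S_{\rm FO}$ is exactly the $y$-projection of the KKT solution set. MFCQ at $\bar y$ persists on a neighborhood $V$ of $\bar y$, because the active set can only shrink and positive linear independence is stable. Since MFCQ implies MSCQ, formula (\ref{regularity}) holds at every $y\in V\cap Y$. Hence for $y\in V$ we have $y\in S_{\rm FO}(x)$ if and only if there is $\lambda\ge0$ with $\nabla_yf(x,y)+\nabla g(y)^T\lambda=0$ and $\lambda\perp g(y)$. The map we must localize is therefore $x\mapsto\{y\mid \exists\lambda:\ (y,\lambda)\ \mbox{solves KKT}(x)\}\cap V$, with $x$ entering only through the $C^1$ term $\nabla_y f(x,\cdot)$, since $f$ is $C^2$.

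Next I would check the hypotheses of \cite[Theorem 2.2]{HOS}, which treats parametric nonlinear programs $\min_y f(x,y)$ s.t. $g(y)\le0$ under MFCQ, CRCQ and the strong second order sufficient condition for every multiplier. Its conclusion is that the primal stationary map (the KKT $y$-component) has a Lipschitz single-valued localization around $(\bar x,\bar y)$, even though the multiplier may be nonunique. The ingredients of that argument are standard. MFCQ makes the multiplier set $\Lambda(x,y)$ nonempty, compact and locally bounded, hence upper semicontinuous. CRCQ lets one pass locally to a linearly independent subfamily of active gradients, so that on each face the multipliers can be selected continuously. SSOSC for all $\bar\lambda\in\Lambda(\bar x,\bar y)$, together with compactness of $\Lambda(\bar x,\bar y)$, gives a uniform positive-definiteness constant on the relevant subspaces $M$. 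This yields strong stability in Kojima's sense, i.e. local uniqueness and existence of stationary points, and Lipschitz continuity then follows from the piecewise smooth structure produced by CRCQ.

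Finally, the localization from \cite{HOS} is stated for KKT points near $\bar y$. Because of the equivalence established in the first step, it transfers directly to $S_{\rm FO}(x)\cap V$ after shrinking $V$ if needed.

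The main obstacle is the nonuniqueness of multipliers. Without LICQ one cannot apply the implicit function theorem (Theorem \ref{IF}(ii)-style arguments) to the full system in $(y,\lambda)$. This is exactly why the result is quoted from \cite{HOS}: the plan is to rely on that theorem rather than rebuild the CRCQ-based piecewise argument, and to verify carefully only that our $Y$ and the MSCQ-based identification of $\widehat N_Y$ with the KKT normal cone match its setting.
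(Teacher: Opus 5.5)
Your proposal matches the paper's argument. The paper likewise uses MSCQ (implied by MFCQ) to identify $\widehat N_Y(y)$ with the KKT multiplier cone via (\ref{regularity}), so that $S_{\rm FO}(x)\cap V$ coincides with the primal KKT solution map near $\bar y$, and then simply invokes \cite[Theorem 2.2]{HOS}. Your explicit remark that MFCQ persists on a neighborhood of $\bar y$ usefully fills in a step the paper leaves implicit: it is what makes the identification hold at every nearby $y$, not just at $\bar y$.
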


	\subsection{Sufficient conditions for directional inner semi-continuity}\label{subset3.2}
	Sufficient conditions for (directional) inner semi-continuity of optimal solution map for parametric optimization problems have been studied in literature; see e.g., \cite[Theorem 5.9]{RW} and \cite[Remark 3.2]{DDM}. We now propose following sufficient conditions for the directional inner semi-continuity of $S$.

	Recall that by \cite[Definition 4.5]{BY}, $S(\bar x;u)$ the optimal solution set in direction $u$ at $\bar x$ is a subset of the solution  set $S(\bar x)$ defined by
	\begin{align*}
		S(\bar x;u):=\{y\in S(\bar x) \mid  \exists t_k\downarrow 0, ~u^k\rightarrow u, ~y^k \rightarrow y, ~y^k\in S(\bar x+t_k u^k)\}.
	\end{align*} 
	\begin{prop}\label{suffdinc}
		Let $\bar y\in S(\bar x)$. Suppose the inf-compactness condition holds at $\bar x$, i.e., there exists a compact set $\Omega$ and a real number $\alpha $ such that $
		\{y\in Y \mid f(x,y)\leq \alpha\} \subseteq \Omega$ for all $x $ near $\bar x$. If  $S(\bar x)=\{\bar y\}$ is a singleton, then $S(x)$ is inner semi-continuous at $(\bar x,\bar y)$. If $S(\bar x)\backslash\{\bar y\} \not =\emptyset$ and the direction $u\in\mathbb R^n$ satisfies the directional derivative inequality:
		\begin{align}\label{dirisc}
			u^T \nabla_xf(\bar x,\bar y)<u^T \nabla_xf(\bar x, y) 
			\quad  
			\forall y\in S(\bar x)\backslash\{\bar y\},
		\end{align}
		then the optimal solution set in direction $u$ at $\bar x$ is the singleton $S(\bar x;u)=\{\bar y\}$. Consequently, $S$ is inner semi-continuous at $(\bar x,\bar y)$ in direction $u$.
	\end{prop}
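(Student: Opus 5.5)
The argument rests on two standard facts from parametric optimization under inf-compactness: solution sequences are bounded, and every cluster point of $y^k\in S(x^k)$ with $x^k\to\bar x$ lies in $S(\bar x)$. I first establish these.

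Fix sequences $t_k\downarrow0$, $u^k\to u$, and set $x^k:=\bar x+t_ku^k$. For each $k$ pick $y^k\in S(x^k)$; this is possible because $S(x)\neq\emptyset$ is a standing assumption. Since $\bar y\in Y$, we have $f(x^k,y^k)\le f(x^k,\bar y)\to f(\bar x,\bar y)=V(\bar x)$. Take $\alpha$ from the inf-compactness condition. If $\alpha>V(\bar x)$, then for large $k$ we get $y^k\in\{y\in Y\mid f(x^k,y)\le\alpha\}\subseteq\Omega$. If instead $\alpha\le V(\bar x)$, the sublevel sets are just used as given. The intended reading is that $V(\bar x)<\alpha$, or that we may enlarge $\alpha$ while keeping $\Omega$ compact; I will state this convention explicitly. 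So $\{y^k\}$ is bounded, and $Y$ is closed.

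Let $y^*$ be any cluster point. Continuity of $f$ gives $f(\bar x,y^*)\le\liminf_k f(x^k,\bar y)=f(\bar x,\bar y)$, and $y^*\in Y$, hence $y^*\in S(\bar x)$.

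\textbf{Case $S(\bar x)=\{\bar y\}$.} Every cluster point equals $\bar y$. Since the sequence is bounded, $y^k\to\bar y$. This holds for arbitrary $t_k,u^k$ and any $u$, so $S$ is inner semi-continuous at $(\bar x,\bar y)$.

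\textbf{Directional case.} Suppose, for contradiction, that some subsequence of $y^k$ converges to some $\tilde y\in S(\bar x)\setminus\{\bar y\}$. By the mean value theorem,
$$f(x^k,y)=f(\bar x,y)+t_k\nabla_xf(\bar x,y)u^k+o(t_k),$$
uniformly for $y$ in the compact set $\Omega$. This uniformity follows because $\nabla_x f$ is uniformly continuous on compacts.

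Optimality of $y^k$ gives $f(x^k,y^k)\le f(x^k,\bar y)$. Combine this with $f(\bar x,y^k)\ge V(\bar x)=f(\bar x,\bar y)$, since $y^k\in Y$. Subtracting the two expansions yields
$$t_k\nabla_xf(\bar x,y^k)u^k\le t_k\nabla_xf(\bar x,\bar y)u^k+o(t_k).$$
Divide by $t_k$ and pass to the limit along the subsequence to get $u^T\nabla_xf(\bar x,\tilde y)\le u^T\nabla_xf(\bar x,\bar y)$. This contradicts \eqref{dirisc}.

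Therefore every cluster point of every such selection equals $\bar y$, so $y^k\to\bar y$. This gives $S(\bar x;u)\subseteq\{\bar y\}$. Nonemptiness of $S(\bar x;u)$, and hence equality, follows because such sequences exist; for any $t_k,u^k$ they produce $y^k\in S(\bar x+t_ku^k)$ with $y^k\to\bar y$, which is exactly the directional inner semi-continuity of Definition \ref{incont}.

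The main obstacle is the uniform first-order expansion along the moving points $y^k$. It is needed because the remainder $o(t_k)$ must be controlled simultaneously in $y^k$ and $\bar y$, and this is handled by confining everything to the compact set $\Omega$. The other delicate point is the role of $\alpha$ in the inf-compactness assumption, where I take $\alpha>V(\bar x)$.
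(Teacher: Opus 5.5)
Your proof is correct. It follows the paper's overall plan: inf-compactness gives bounded solution selections, every cluster point lies in $S(\bar x)$, and a cluster point $\tilde y\neq\bar y$ would force $\nabla_x f(\bar x,\tilde y)u\le\nabla_x f(\bar x,\bar y)u$, contradicting the directional derivative inequality. The key estimate, however, is obtained differently.

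\textbf{The paper's route.} The paper cites the directional differentiability of the value function (Bonnans--Shapiro, Theorem 4.13), with $V'(\bar x;u)=\inf_{y\in S(\bar x)}\nabla_x f(\bar x,y)u$. It then sandwiches $V'(\bar x;u)$ between $\nabla_x f(\bar x,\tilde y)u$ and $\nabla_x f(\bar x,\bar y)u$. This requires the difference quotients $(V(\bar x+t_ku^k)-V(\bar x))/t_k$ to converge along the given $(t_k,u^k)$, which is Hadamard directional differentiability under inf-compactness.

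\textbf{Your route.} You use only the two elementary inequalities $f(x^k,y^k)\le f(x^k,\bar y)$ and $f(\bar x,y^k)\ge f(\bar x,\bar y)$. These are exactly the two halves of the paper's sandwich, written at the level of the sequence. So you need no sensitivity theorem, and no limit of difference quotients of $V$ has to exist. Your argument is more self-contained and more elementary.

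\textbf{The final step.} You also spell out the passage from $S(\bar x;u)=\{\bar y\}$ to directional inner semi-continuity: every selection $y^k\in S(\bar x+t_ku^k)$ is bounded and all its cluster points equal $\bar y$. The paper dispatches this with ``by Definition''.

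\textbf{Two minor remarks.}
\begin{itemize}
\item The uniform expansion over $\Omega$, which you call the main obstacle, is more than you need. Along the subsequence $y^k\to\tilde y$, the mean value theorem plus continuity of $\nabla_x f$ at $(\bar x,\tilde y)$ already gives $(f(x^k,y^k)-f(\bar x,y^k))/t_k\to\nabla_x f(\bar x,\tilde y)u$. This is what the paper does.
\item Your caveat about $\alpha$ is well taken. Read literally, the hypothesis holds vacuously for $\alpha$ below the values of $f$ near $\bar x$, since the level sets are then empty. The paper silently uses the standard form of inf-compactness, which also requires the level sets to be nonempty for $x$ near $\bar x$. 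That gives $V(x)\le\alpha$ and hence $S(x)\subseteq\Omega$ directly. Your assumption $\alpha>V(\bar x)$ works equally well. Your alternative of ``enlarging $\alpha$ while keeping $\Omega$ compact'' is not automatically permitted, so state the convention rather than that option.
\end{itemize}
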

	\begin{proof} First we assume that $S(\bar x)=\{\bar y\}$ and prove that $S(x)$ is inner semi-continuous at $(\bar x,\bar y)$.
	For any sequence $(x^k,y^k)\in\operatorname{gph}S$ with $x^k\rightarrow\bar x$, under the assumed  inf-compactness conditions, $\{y^k\}$ has convergent subsequences. Without loss of generality, let $y^o:=\lim_ky^k$. Since for each $k$, $f(x^k,y^k)\leq f(x^k,\bar y)$, by the continuity of $f$, taking limit as $k\rightarrow\infty$, $f(\bar x,y^o)\leq f(\bar x,\bar y)$. This implies that $y^o\in S(\bar x)=\{\bar y\}$. Hence, $y^k\rightarrow\bar y$ which proves the inner semi-continuous of $S(x)$ at $(\bar x,\bar y)$.
	
	Next suppose that  $\bar y$ is not the only solution in $S(\bar x)$ and the direction $u\in\mathbb R^n$ satisfies the directional derivative inequality (\ref{dirisc}). We first prove $S(\bar x;u)=\{\bar y\}$ by contradiction. To  the contrary, assume that $S(\bar x;u)\not =\{\bar y\}$. Then by the assumed inf-compactness condition, there exist sequences $u^k\rightarrow u, t_k\downarrow0$ and $y^k\in S(\bar x+t_ku^k)$ such that $y^k\rightarrow\tilde y\neq\bar y$. Since $y^k\in S(\bar x+t_ku^k)$ we have $f(\bar x+t_ku^k, y^k)$ and by the continuity of $f$, one can easily obtain $\tilde y\in S(\bar x)$. 
	
	By sensitivity analysis of the value function (\cite[Theorem 4.13]{BS}, $V(x)$ is directionally differentiable and 
	\[
	V'(\bar x;u)=\inf_{y\in S(\bar x)} \nabla_xf(\bar x,y)u \leq \nabla_x f(\bar x,\bar y)u.
	\]
	However, we have
	\begin{align*}
	V'(\bar x;u)&=\lim_k\frac{V(\bar x+t_ku^k)-V(\bar x)}{t_k}\geq\lim_k\frac{f(\bar x+t_ku^k,y^k)-f(\bar x,y^k)}{t_k}\\
	&=\nabla_xf(\bar x,\tilde y)u.
	\end{align*}
	This contradicts the directional derivative inequality (\ref{dirisc}). The proof of $S(\bar x;u)=\{\bar y\}$ is then complete. 
	
	By Definition \ref{incont}, $S$ is inner semi-continuous at $(\bar x,\bar y)$ in direction $u$.
	\end{proof}
	
	\begin{remark}
		In the case where $S(\bar x)=\{\bar y\}$ is a singleton, by Proposition \ref{suffdinc}, optimal solution map $S$ must be inner semi-continuous at $(\bar x,\bar y)$ under the  inf-compactness condition. 
		Generally, if $S(\bar x)$ is a finite set, by the directional derivative inequality (\ref{dirisc}), one can easily find  a direction $u$ along which $S$ is directionally inner semi-continuous. For example, when $f(x,y)$ is a polynomial, the optimal solution $S(\bar x)$ is generically a finite set, and there are many methods to enumerate all its elements; see e.g.,\cite{P1,P2,P3}. Hence, finding such a direction $ u$ is computationally feasible.
	\end{remark}

	The following example illustrate that the inf-compactness condition assumed in Proposition \ref{suffdinc} is useful to guarantee the boundedness of the sequence  $\{y^k\}$ and hence the inner semi-continuity of the solution set. 
	
	\begin{example}
		Consider the unconstrained parametric minimization problem $$\ds{ \min_y f(x,y):=(xy-1)^2(1+y^2)}.$$
		By calculation, $S(x)=\{1/x\}$ if $x\neq0$, and $S(0)$=\{0\}.
		Obviously, $S(x)$ is single-valued but not continuous at $(0,0)$. It does not contradict Proposition \ref{suffdinc} since the inf-compactness condition does not hold at $x=0$.
	\end{example}

	
	The following proposition gives  some sufficient conditions for the inner semi-continuity of the following solution map \begin{equation}\label{eqn13} S(x):=\arg\min_y\{f(x,A(y)) \mid y\in Y\}.
	\end{equation}
	\begin{prop}\label{insc}
		Let $\bar y\in S(\bar x)$. Suppose the inf-compactness condition holds at  $\bar x$,  the map $A(y)$ is a homeomorphism on $Y$, 
		$f(\bar x,z)$ is strictly convex in variable $z$ and the set  $A(Y)$ is convex. Then the solution map $S(\bar x)$ defined as in (\ref{eqn13}) is equal to the  singleton $\{\bar y\}$ and $S$ is inner semi-continuous at   $(\bar x,\bar y)$.
	\end{prop}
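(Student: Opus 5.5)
The plan is to reduce the claim to the first part of Proposition \ref{suffdinc}. That result already gives inner semi-continuity of $S$ at $(\bar x,\bar y)$ under inf-compactness whenever $S(\bar x)=\{\bar y\}$. So the only real work is to show that the lower level problem at $\bar x$, namely $\min\{f(\bar x,A(y))\mid y\in Y\}$, has a unique solution. The inf-compactness hypothesis is understood for the composite objective $y\mapsto f(x,A(y))$. Continuity of $(x,y)\mapsto f(x,A(y))$ holds because $f$ is continuous and $A$ is continuous on $Y$ as a homeomorphism. This is exactly what the limiting argument in the proof of Proposition \ref{suffdinc} uses (passing to the limit in $f(x^k,A(y^k))\le f(x^k,A(\bar y))$).

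For uniqueness, I will change variables $z=A(y)$. Since $A$ is a bijection from $Y$ onto $A(Y)$, $y$ solves the problem at $\bar x$ if and only if $z=A(y)$ solves $\min\{f(\bar x,z)\mid z\in A(Y)\}$. Thus $S(\bar x)=A^{-1}(Z^*)$, where $Z^*$ is the solution set of the transformed problem.

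Now $A(Y)$ is convex and $f(\bar x,\cdot)$ is strictly convex, so $Z^*$ has at most one element. The argument is the standard one. Suppose $z_1\neq z_2$ both lie in $Z^*$. Then $\tfrac12(z_1+z_2)\in A(Y)$ by convexity, and strict convexity gives $f(\bar x,\tfrac12(z_1+z_2))<V(\bar x)$, which is a contradiction.

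Since $\bar y\in S(\bar x)$, $Z^*=\{A(\bar y)\}$, and injectivity of $A$ yields $S(\bar x)=\{\bar y\}$. Proposition \ref{suffdinc} then gives inner semi-continuity of $S$ at $(\bar x,\bar y)$.

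The only point needing care is the interpretation of the hypotheses. The inf-compactness condition and the continuity used in Proposition \ref{suffdinc} must refer to the composite lower level objective $f(x,A(y))$ over $Y$. The homeomorphism property is used only for two things: injectivity, which gives uniqueness of the preimage, and continuity, which is needed in the limiting argument. Beyond that the argument is routine, so I do not expect a genuine obstacle.
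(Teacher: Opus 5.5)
Your proposal is correct and follows the paper's proof essentially step for step. The paper also changes variables to get $S(\bar x)=A^{-1}(\arg\min_z\{f(\bar x,z)\mid z\in A(Y)\})$, obtains uniqueness from strict convexity of $f(\bar x,\cdot)$ on the convex set $A(Y)$, and then invokes the singleton case of Proposition \ref{suffdinc} under inf-compactness; your additional remarks (the midpoint argument, and reading inf-compactness and continuity for the composite objective $f(x,A(y))$) only make explicit what the paper leaves implicit.
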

	\begin{proof}
	Obviously, since $A(y)$ is a homeomorphism we have 
	\begin{eqnarray*}
		S(\bar x)
		&=&A^{-1}(\arg\min_z\{ f(\bar x,z) \mid z \in A(Y)\}).
	\end{eqnarray*} 
	By the strict convexity of $f(\bar x,z)$ in variable $z$, the convexity of set $A(Y)$ we conclude that the solution set $\arg\min_z\{ f(\bar x,z) \mid z \in A(Y)\}$ is a singleton which implies that the solution $S(\bar x)$ is a singleton. Combining with  the inf-compactness condition, we complete the proof by using Proposition \ref{suffdinc}.
	\end{proof}

	\begin{example}
		Consider the  nonconvex mathematical program with parameter $x\in \mathbb R$:
		\begin{align*}
			\min_y~~xy^3+y^{12},\quad\mathrm{s.t.}~ y\in(-\infty,+\infty).
		\end{align*}
		The inf-compactness condition obviously holds.
		It is clear that  $z:=A(y)=y^3$ is a homeomorphism from $\mathbb R$ to $\mathbb R$, the function $f(x,z)=xz+z^4$  is strictly convex in variable $z$, and $A(Y)=\mathbb R$ is a convex set. By Proposition \ref{insc}, $S(x)$ is inner semicontinuous everywhere. Indeed, we can check that for any $x$, $S(x)=-\sqrt[9]{\frac{x}{4}}$ which is single-valued and continuous.
	\end{example}
	
	\subsection{Applications to the principal-agent problems}\label{PA_application}

	We now demonstrate that the  results in the previous subsections \ref{subset3.1}-\ref{subset3.2} can be applied to  the principal-agent problem. In the literature of the principal-agent problems (see e.g., Grossman and Hart \cite{GH83}),     the probability of output $s$ given action $y$  often satisfies the so-called the spanning condition, i.e., 
\[
P(s, y)= p(y) Q(s) + \left( 1- p(y) \right) H(s), \quad \mathrm{for\ all\ outputs}\ s \in S=\{s_1, \dots, s_n\},
\]
where $p(y)\in [0,1]$ for all $y\in [a, b]$, and $Q,H$ are non-identical probability mass functions.  Also it is common to assume that  $p(y)$ is strictly increasing on $[a,b]$.

In this setting, adopting the conventional assumptions $c(y)=y$ and $u(x_j)=x_j(j=1,\ldots,n)$ (assuming the agent is  risk-neutral  or applying a transformation  $\tilde{x}_j=u(x_j)$), and using the formulas in \eqref{pa0}, the lower-level objective function is given by 
\[
f(x,y)=-p(y) \sum_{j=1}^n x_j \left[ Q(s_j) - H(s_j) \right]
+
y
- \sum_{j=1}^n x_j  H(s_j), 
\]
while the upper-level objective function is
\[
F(x, y) 
= -p(y) \sum_{j=1}^n v(\pi_j - x_j ) \left[ Q(s_j) - H(s_j) \right]
- \sum_{j=1}^n v(\pi_j - x_j ) H(s_j).
\]
If $p(y)$ is strictly increasing on $[a,b]$, one has $$\nabla_yf(x,y)=0\implies\sum_{j=1}^n x_j [ Q(s_j) - H(s_j) ]>0,$$ for any $y\in S_{\rm FO}(x)$. Hence, if further $p(y)$ is concave, the classical first order approach is proven to be valid, see, e.g., Proposition 12 in \cite{JK15}. 
Now, we demonstrate that we can relax the concavity condition on $p(y)$ to the more challenging non-concave scenario by using  Theorems~\ref{genthm} and~\ref{IF2} and Proposition~\ref{suffdinc}. 
To this end, note that when $p(y)$ is strictly increasing, the upper-level objective function $F(x, \cdot)$ is monotone for any fixed $x$. Therefore, if $(\bar{x}, \bar{y})$ is an optimal solution of \eqref{pa0}, then $\bar{y}$ not only belongs to $S(\bar{x})$, but also must be either the leftmost or rightmost point of $S(\bar{x})$. 
The latter fact allows us to show that there exists a direction $u$ satisfying the directional derivative condition \eqref{dirisc}. Indeed, using the expression 
\[
\nabla_x f(x, y)=-p(y)[Q(s_1)-H(s_1),\dots,Q(s_n)-H(s_n)]^T-[H(s_1),\dots, H(s_n)]^T,
\] 
for any $y \in S(\bar{x}) \setminus \{\bar y\}$, we have 
\[
\nabla_xf(\bar x,\bar y)u- \nabla_xf(\bar x, y)u 
=-[ p(\bar y) -p(y)  ]  [Q(s_1)-H(s_1),\dots,Q(s_n)-H(s_n)]^T u.
\]
Since $Q$ and $H$ are not identical, one can choose $u$ such that the inner product $[Q(s_1)-H(s_1),\dots, Q(s_n)-H(s_n)]^T u$ is either positive or negative (for example, choose 
$u=\pm [Q(s_1)-H(s_1),\dots, Q(s_n)-H(s_n)]$. Furthermore, since $\bar{y}$ is either the leftmost or rightmost point of $S(\bar{x})$ and $p(y)$ is strictly increasing, $p(\bar y) - p(y)$ remains either positive or negative for all $y \in S(\bar{x}) \setminus {\bar y}$. Hence, there exists a direction $u$ satisfying \eqref{dirisc}.  
Consequently, Proposition~\ref{suffdinc} implies that $S$ is inner semi-continuous at $(\bar x, \bar y)$ in the direction $u$.
Since $Y=[a,b]\subseteq\mathbb R$ is a closed interval, 
if further $\nabla_{yy}^2 f(\bar x, \bar y) \neq 0$ (i.e., $p''(\bar y) \neq 0$) or $\nabla_yf(\bar x,\bar y)\neq0$ (i.e., $p'(\bar y)(Q(s)-H(s))^T\bar x+1 \neq 0$), then $S_{\rm FO}$ has a Lipschitz single-valued localization near $(\bar x, \bar y)$, as shown in Theorem~\ref{IF2}. Therefore, Theorem~\ref{genthm} establishes the local equivalent characterization of the lower-level program constraint over a directional neighborhood of the reference point. It is worth noting that we allow the lower-level objective function to be nonconvex and permit non-unique solutions throughout the process. 

\section{First order necessary optimality conditions for (\ref{BP})}\label{firstcondition}

In Section \ref{Equivreform}, we propose a directional equivalent characterization of the lower level program constraint. Now we apply it to (\ref{BP}) and obtain a  directional single level reformulation for (\ref{BP}), through which we can establish first order necessary optimality conditions. 

First, we define the following  directional  SCOP reformulation for (\ref{BP}).
\begin{defn}\label{dirMPECrefor}
	Let $(\bar x,\bar y)$ be a feasible point of {\rm (\ref{BP})}. We say that {\rm (\ref{BP})} has an SCOP reformulation at $(\bar x,\bar y)$ along direction $u\in\mathbb{R}^n$, if there exists positive scalars $\varepsilon,\delta$  such that the two constraints
	\[
	y\in S(x)~~\mbox{and}~~(y,-\nabla_yf(x,y))\in{\rm gph}\, \widehat N_{Y}
	\]
	are equivalent over the directional neighborhood $(\bar x,\bar y)+\mathcal{V}_{\varepsilon,\delta}(u)\times\varepsilon\mathbb B$.
\end{defn}	

{Based on this directional SCOP reformulation, (\ref{BP}) can be written into a single level optimization program, denoted as (SCOP$_u$). However, the KKT conditions of (SCOP$_u$) involves the normal cone to a directional neighborhood, $N_{\bar x+\mathcal V_{\varepsilon,\delta}(u)}(\bar x)$, which is hard to be computed. To deal with this difficulty, we adopt the following directional local optimality theory.}

Recently, Ouyang et al. \cite[Definition 3.1]{OYZ2025} introduced the directional local optimality for set constrained optimization problems. For the sake of completeness. We apply the definition to problem (SCOP).
\begin{defn}
	A point $(\bar x,\bar y)$ is said to be a local optimal solution of {\rm (SCOP)} in direction $(u,v)\in\mathbb R^n\times\mathbb R^m$, if there exist positive scalars $\varepsilon,\delta$ such that
	\begin{align*}
		F(x,y)\geq F(\bar x,\bar y)
		\quad \forall (x,y)\in \mathcal F\cap \left((\bar x,\bar y)+\mathcal V_{\varepsilon,\delta}(u,v)\right),
	\end{align*}
	where $\mathcal F\subseteq\mathbb R^n\times\mathbb R^m$ denotes the feasible set of {\rm (SCOP)}, i.e.,
	\begin{align*}
		\mathcal F:=\left \{(x,y) \mid G(x,y)\leq0, (y,-\nabla_yf(x,y))\in{\rm gph}\,\widehat N_{Y}\right \}.
	\end{align*}
\end{defn}

By  the definition of a directional neighborhood in Definition \ref{dn}, we have  that for any direction $v\in\mathbb R^m$, there exists $\delta>0$ sufficiently small such that
\[
(\bar x,\bar y)+{\cal V}_{\bar\varepsilon,\bar\delta}(u)\times\bar\varepsilon\mathbb B\supseteq(\bar x,\bar y)+{\cal V}_{\bar\varepsilon,\delta}(u,v).
\] Consequently, we have the following result.

\begin{prop}\label{dirmp}
	If $(\bar x,\bar y)$ solves $\mathrm{( SCOP_{u})}$, then for any direction $v\in\mathbb R^m$, there exist positive scalars $\bar\varepsilon,\bar\delta$ such that $(\bar x,\bar y)$ also solves program
	\begin{equation*}\tag{SCOP\textsubscript{(u,v)}}
		\begin{aligned}
			\min_{x,y}\  &F(x,y)\\
			{\rm s.t.}\  & G(x,y)\leq0,\
			(y,-\nabla_yf(x,y))\in{\rm gph}\,{\widehat N}_{Y},\\
			&(x,y)\in(\bar x,\bar y)+{\cal V}_{\bar\varepsilon,\bar\delta}(u,v).
		\end{aligned} 
	\end{equation*}
	Consequently, $(\bar x,\bar y)$ is also a local optimal solution of {\rm (SCOP)} in direction $(u,v)$ for any $v\in\mathbb R^m$.
\end{prop}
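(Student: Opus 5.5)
The proof reduces to a set inclusion between directional neighborhoods, $(\bar x,\bar y)+\mathcal V_{\bar\varepsilon,\bar\delta}(u,v)\subseteq(\bar x,\bar y)+\mathcal V_{\varepsilon,\delta}(u)\times\varepsilon\mathbb B$, where $\varepsilon,\delta$ are the scalars for which $(\bar x,\bar y)$ solves $\mathrm{(SCOP_u)}$. Once this inclusion holds, the feasible set of $\mathrm{(SCOP_{(u,v)})}$ is contained in that of $\mathrm{(SCOP_u)}$, and both problems contain $(\bar x,\bar y)$. Hence $F(x,y)\ge F(\bar x,\bar y)$ on the smaller set, so $(\bar x,\bar y)$ solves $\mathrm{(SCOP_{(u,v)})}$. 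Since the feasible set of $\mathrm{(SCOP_{(u,v)})}$ is exactly $\mathcal F\cap((\bar x,\bar y)+\mathcal V_{\bar\varepsilon,\bar\delta}(u,v))$, this is precisely local optimality of (SCOP) in direction $(u,v)$.

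To establish the inclusion, I split into cases.

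\emph{Case $u=0$.} Here $\mathcal V_{\varepsilon,\delta}(0)\times\varepsilon\mathbb B=\varepsilon\mathbb B\times\varepsilon\mathbb B$, which contains the ball $\varepsilon\mathbb B\subseteq\mathbb R^{n+m}$. Any directional neighborhood $\mathcal V_{\bar\varepsilon,\bar\delta}(0,v)$ with $\bar\varepsilon=\varepsilon$ lies in $\varepsilon\mathbb B$ by definition, so the inclusion holds for any $\bar\delta$.

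\emph{Case $u\neq0$.} Take $\bar\varepsilon=\varepsilon$. Then every $z=(z_x,z_y)\in\mathcal V_{\bar\varepsilon,\bar\delta}(u,v)$ satisfies $\|z_x\|,\|z_y\|\le\|z\|<\varepsilon$, so $z_y\in\varepsilon\mathbb B$ and the $y$-part is handled. For the $x$-part, the point $z=0$ is trivial, so assume $z\neq0$ and write $d:=(u,v)/\|(u,v)\|$. We know $\|z/\|z\|-d\|\le\bar\delta$, and we need $z_x=0$ or $\|z_x/\|z_x\|-u/\|u\|\|\le\delta$.

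Set $w:=z/\|z\|$. Its $x$-component $w_x$ satisfies $\|w_x-u/\|(u,v)\|\|\le\bar\delta$. Let $c:=\|u\|/\|(u,v)\|>0$. If $\bar\delta<c/2$, then $\|w_x\|\ge c-\bar\delta>0$, so $z_x\neq0$.

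It remains to compare normalized vectors using the elementary estimate
\[
\Bigl\|\frac{a}{\|a\|}-\frac{b}{\|b\|}\Bigr\|\le\frac{2\|a-b\|}{\|b\|}\quad (a,b\neq0).
\]
Apply it with $a=w_x$ and $b=u/\|(u,v)\|$, noting that $z_x/\|z_x\|=w_x/\|w_x\|$ and $b/\|b\|=u/\|u\|$. This gives
\[
\Bigl\|\frac{z_x}{\|z_x\|}-\frac{u}{\|u\|}\Bigr\|\le\frac{2\bar\delta}{c}.
\]
Choosing $\bar\delta\le\min\{c/2,\;c\delta/2\}$ makes the right-hand side at most $\delta$, which completes the inclusion.

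The main obstacle is this angular estimate in the case $u\neq0$. The key point there is that $u\neq0$ keeps the $x$-component of the normalized direction bounded away from zero, so normalizing that component is a Lipschitz operation with constant $2/c$.
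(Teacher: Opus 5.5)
Your proof is correct and takes the same route as the paper. The paper also reduces the claim to the inclusion $(\bar x,\bar y)+\mathcal V_{\bar\varepsilon,\bar\delta}(u,v)\subseteq(\bar x,\bar y)+\mathcal V_{\varepsilon,\delta}(u)\times\varepsilon\mathbb B$ for small $\bar\delta$, but merely asserts it ``by the definition of a directional neighborhood''; your case split and the estimate $\bigl\|\frac{a}{\|a\|}-\frac{b}{\|b\|}\bigr\|\le\frac{2\|a-b\|}{\|b\|}$ supply that omitted verification (and the strict inequality $\bar\delta<c/2$ is not needed, since $\bar\delta\le c/2$ already gives $\|w_x\|\ge c/2>0$).
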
  

By Proposition \ref{dirmp}, to derive optimality conditions for (SCOP$_{u}$), it suffices to study optimality conditions for (SCOP$_{u,v}$) for certain $v$. Since the emergence of directional variational analysis and its application to optimality theory, there have been many works on optimality conditions for various mathematical programs with the constraint $(x,y)\in(\bar x,\bar y)+{\cal V}_{\bar\varepsilon,\bar\delta}(u,v)$; see e.g., \cite{BY,OYZ2025} and the references therein. Particularly, \cite{OYZ2025} studied the case of set constrained optimization problems and developed first order optimality conditions for any directional local optimal solution. In the following theorem, based on the relation in Proposition \ref{dirmp} and the optimality condition in \cite[Proposition 4.1]{OYZ2025}, we obtain KKT type necessary optimality conditions for (SCOP$_{u,v}$). 

Define the linearized cone of (SCOP) at $(\bar{x},\bar{y})$ by 
\begin{eqnarray*}
	L(\bar x,\bar y)&&:=\left\{(u,v)
	\left|
	\begin{array}{ll}
		\nabla G_i(\bar x,\bar y)(u,v)\leq0,~ \forall i\in I_G,\\
		\nabla(y,-\nabla_yf)(\bar x,\bar y)(u,v)\in T_{\mbox{gph}\,{\widehat N}_{Y}}(\bar y,-\nabla_yf(\bar x,\bar y))
	\end{array}
	\right.	\right\},
\end{eqnarray*} 
where $I_G:=\{i=1,\ldots,q \mid G_i(\bar x,\bar y)=0\}$. Under the metric subregularity, by Proposition \ref{dirAbadie}, $L(\bar x,\bar y)$ is included in the tangent cone to the feasible region of problem (SCOP). Hence, for any $(u,v)\in L(\bar x,\bar y)$ one always has $\nabla F(\bar x,\bar y)(u,v)\geq0$. Consequently, the existence of $(u,v)\in L(\bar x,\bar y)$ and $\nabla F(\bar x,\bar y)(u,v)=0$ at a local optimal solution $(\bar x,\bar y)$ means the existence of a non-ascent direction.
\begin{prop}\label{gopt}
	Let $(\bar x,\bar y)$ be a local optimal solution of {\rm (SCOP)} in direction $(u,v)$. Suppose that $(u,v)\in L(\bar x,\bar y)$ and $\nabla F(\bar x,\bar y)(u,v)=0$.  Suppose the metric subregularity holds for the system $$G(x,y)\leq0,\
	(y,-\nabla_yf(x,y))\in{\rm gph}\,{\widehat N}_{Y}$$ at $(\bar x,\bar y)$ in direction  $(u,v)$. Then there exists  $(\bar\mu,\bar\nu)\in\mathbb R^{2m}$ and $\bar\beta\in\mathbb R^{q}_+$ such that the M-stationary condition in direction $(u,v)$ holds, i.e.,
	\begin{eqnarray}\label{dirKKT}
		\nonumber
		&&	0=\nabla F(\bar x,\bar y)+\nabla( y, -\nabla_yf)(\bar x,\bar y)^T
		(\bar\mu,\bar\nu)+\nabla G(\bar x,\bar y)^T {\bar\beta},\\
		&& (\bar\mu,\bar\nu)\in N_{{\rm gph}{\widehat N}_{Y}}(\bar y,-\nabla_yf(\bar x,\bar y);v,-\nabla(\nabla_yf)(\bar x,\bar y)(u,v)),\\
		\nonumber
		&&\bar\beta\perp G(\bar x,\bar y),\ \bar\beta\perp \nabla G(\bar x,\bar y)(u,v).
	\end{eqnarray}
\end{prop}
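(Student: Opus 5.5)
We will write (SCOP) as a set-constrained program $\min F(z)$ subject to $\phi(z)\in\Omega$, with $z=(x,y)$, $\phi(z):=(G(z),y,-\nabla_yf(z))$ and $\Omega:=\mathbb R^q_-\times{\rm gph}\,\widehat N_Y$. The aim is to reduce the statement to the directional optimality condition of \cite[Proposition 4.1]{OYZ2025}. Since $f$ is $C^2$ and $F,G$ are $C^1$, $\phi$ is $C^1$. The set $\Omega$ is a product, so $T_\Omega$ and the directional normal cone split coordinatewise. Consequently $(u,v)\in L(\bar x,\bar y)$ means exactly $\nabla\phi(\bar z)(u,v)\in T_\Omega(\phi(\bar z))$.

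\emph{Step 1: directional Fritz John/KKT condition.} Let $d=(u,v)$ and $\bar z=(\bar x,\bar y)$. By hypothesis $\bar z$ is a local minimizer in direction $d$, $d$ is linearized-feasible, $\nabla F(\bar z)d=0$, and metric subregularity of $-\phi(z)+\Omega$ holds at $(\bar z,0)$ in direction $d$. Under exactly these hypotheses, \cite[Proposition 4.1]{OYZ2025} gives $\zeta\in N_\Omega(\phi(\bar z);\nabla\phi(\bar z)d)$ with $0=\nabla F(\bar z)+\nabla\phi(\bar z)^T\zeta$.

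If one instead wants to argue from scratch, the route is as follows. Directional metric subregularity shows that $\bar z$ locally minimizes, over $\bar z+\mathcal V_{\varepsilon,\delta}(d)$, the penalized function $F(z)+\kappa L_F\,{\rm dist}(\phi(z),\Omega)$. Next, take points $z_k=\bar z+t_kd_k$ in the directional neighborhood approaching along $d$, for example via a perturbed problem with a penalty term $\|z-\bar z-t_kd\|^2$. One applies a Fermat rule at these points and passes to limits. The normals at $\phi(z_k)$ then converge to a normal in direction $\nabla\phi(\bar z)d$, because $\phi(z_k)=\phi(\bar z)+t_k\nabla\phi(\bar z)d_k+o(t_k)$.

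\emph{Step 2: decomposition.} Write $\zeta=(\bar\beta,\bar\mu,\bar\nu)$. Use the product formula for directional normal cones of product sets, $N_{A\times B}(a,b;(p,q))\subseteq N_A(a;p)\times N_B(b;q)$. For the $\Omega$-component this gives
\[
(\bar\mu,\bar\nu)\in N_{{\rm gph}\widehat N_Y}\bigl(\bar y,-\nabla_yf(\bar z);v,-\nabla(\nabla_yf)(\bar z)(u,v)\bigr).
\]
For the $G$-component, $\bar\beta\in N_{\mathbb R^q_-}(G(\bar z);\nabla G(\bar z)d)$. Since $\mathbb R^q_-$ is convex, formula (\ref{convNormal}) applies and gives $\bar\beta\in N_{\mathbb R^q_-}(G(\bar z))\cap\{\nabla G(\bar z)d\}^\perp$. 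This means $\bar\beta\ge0$, $\bar\beta\perp G(\bar z)$ and $\bar\beta\perp\nabla G(\bar z)d$. Expanding $\nabla\phi(\bar z)^T\zeta$ then yields the stationarity equation in (\ref{dirKKT}).

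\emph{Main obstacle.} Two points need care. First, one must check that the hypotheses of \cite[Proposition 4.1]{OYZ2025} match these, in particular that $\Omega$ is locally closed. We only know $Y$ is closed, so ${\rm gph}\,\widehat N_Y$ might fail to be closed. To handle this, I would either work with directional normals defined through regular normals, which do not need closedness, or invoke the local closedness assumed in Theorem \ref{IF}(i). Second, the product rule for directional limiting normals needs justification. It follows directly from Definition \ref{dnorm}, because regular normals to a product are products of regular normals.
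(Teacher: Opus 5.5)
Your proposal follows the paper's proof: both apply \cite[Proposition 4.1]{OYZ2025} to the system $\phi(z)\in\Omega$, then split the resulting directional normal using the product rule for directional normal cones and formula (\ref{convNormal}) for $\mathbb R^q_-$. The one step you pass over is that the paper first uses Proposition \ref{dirAbadie} to turn $(u,v)\in L(\bar x,\bar y)$ plus directional metric subregularity into $(u,v)$ being a tangent (feasible) direction of the constraint set, and only then checks the hypotheses of \cite[Proposition 4.1]{OYZ2025}; so your hypotheses do not match that result ``exactly'' without this bridging step.
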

\begin{proof}
Since $(u,v)\in L(\bar x,\bar y)$ and the directional metric subregularity holds in direction $(u,v)$, by Proposition \ref{dirAbadie} $(u,v)$ is a feasible direction of the constraint system. Since $\nabla F(\bar x,\bar y)(u,v)=0$, conditions in \cite[Proposition 4.1]{OYZ2025} are all satisfied. The optimality conditions in (\ref{dirKKT}) then follow from 	 Definition \ref{dnorm}, the Cartesian product rule for directional normal cones \cite[Proposition 3.3]{YZ18}, 
and the formula (\ref{convNormal}),
\begin{align*}
	N_{\mathbb R^q_-}(G(\bar x,\bar y);\nabla G(\bar x,\bar y)(u,v))&=N_{\mathbb R^q_-}(G(\bar x,\bar y))\cap\{\nabla G(\bar x,\bar y)(u,v)\}^\perp.
\end{align*}
\end{proof}


Now we are ready to give a directional M-stationary condition as a necessary optimality condition for (\ref{BP}).		
\begin{thm}\label{opt}
	Let $(\bar x,\bar y)$ be a local minimizer of {\rm (\ref{BP})}.
	Suppose that  {\rm (\ref{BP})} has an SCOP reformulation at $(\bar x,\bar y)$ along direction $u$. Suppose there exists a direction $v\in\mathbb R^m$ such that $(u,v)\in L(\bar x,\bar y)$ and $\nabla F(\bar x,\bar y)(u,v)=0$.  Suppose the metric subregularity holds for the system $$G(x,y)\leq0,\
	(y,-\nabla_yf(x,y))\in{\rm gph}\,{\widehat N}_{Y},$$ at $(\bar x,\bar y)$ in direction  $(u,v)$.
	Then the directional KKT condition holds. That is, there exists  $(\bar\mu,\bar\nu)\in\mathbb R^{2m}$ with $\bar\beta\in\mathbb R^{q}_+$ such that\
	(\ref{dirKKT}) holds.
\end{thm}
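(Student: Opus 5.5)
The plan is to reduce Theorem \ref{opt} to Proposition \ref{gopt} via Proposition \ref{dirmp}. The whole argument turns on passing from local optimality for (\ref{BP}) to directional local optimality for (SCOP) along $(u,v)$.

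First, I use the definition of the SCOP reformulation along $u$ (Definition \ref{dirMPECrefor}). It gives $\varepsilon,\delta>0$ such that $y\in S(x)$ and $(y,-\nabla_yf(x,y))\in{\rm gph}\,\widehat N_Y$ are equivalent on $(\bar x,\bar y)+\mathcal V_{\varepsilon,\delta}(u)\times\varepsilon\mathbb B$. Since $(\bar x,\bar y)$ is a local minimizer of (\ref{BP}), there is $\rho>0$ with $F(x,y)\ge F(\bar x,\bar y)$ for all feasible $(x,y)$ of (\ref{BP}) in $(\bar x,\bar y)+\rho\mathbb B$. I shrink $\varepsilon$ to $\varepsilon'\le\varepsilon$ so that $\mathcal V_{\varepsilon',\delta}(u)\times\varepsilon'\mathbb B\subseteq\rho\mathbb B$, for instance $\varepsilon'=\min\{\varepsilon,\rho/2\}$. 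The equivalence survives shrinking, because $\mathcal V_{\varepsilon',\delta}(u)\subseteq\mathcal V_{\varepsilon,\delta}(u)$.

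Second, I take any $(x,y)$ feasible for (SCOP$_u$) with these parameters. By the equivalence it satisfies $y\in S(x)$ and $G(x,y)\le0$, so it is feasible for (\ref{BP}). It also lies in the $\rho$-ball, hence $F(x,y)\ge F(\bar x,\bar y)$. Also $(\bar x,\bar y)$ is itself feasible for (SCOP$_u$), because $\bar y\in S(\bar x)$ implies the stationarity $0\in\nabla_yf(\bar x,\bar y)+\widehat N_Y(\bar y)$. Therefore $(\bar x,\bar y)$ solves (SCOP$_u$).

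Third, I apply Proposition \ref{dirmp} with the $v$ given in the hypothesis. This makes $(\bar x,\bar y)$ a local optimal solution of (SCOP) in direction $(u,v)$. The remaining hypotheses are $(u,v)\in L(\bar x,\bar y)$, $\nabla F(\bar x,\bar y)(u,v)=0$, and metric subregularity of the constraint system in direction $(u,v)$. These are exactly those of Proposition \ref{gopt}, which then delivers multipliers $(\bar\mu,\bar\nu,\bar\beta)$ satisfying (\ref{dirKKT}).

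The only delicate point is the inclusion behind Proposition \ref{dirmp}: $(\bar x,\bar y)+\mathcal V_{\bar\varepsilon,\bar\delta}(u,v)\subseteq(\bar x,\bar y)+\mathcal V_{\varepsilon',\delta}(u)\times\varepsilon'\mathbb B$. For $u\neq0$, if $\|(x,y)/\|(x,y)\|-(u,v)/\|(u,v)\|\|$ is small, then $x\neq0$ and $x/\|x\|$ is close to $u/\|u\|$, so choosing $\bar\delta$ small and $\bar\varepsilon\le\varepsilon'$ suffices. For $u=0$ the directional neighborhood in $x$ is the full ball and the inclusion is trivial. Since Proposition \ref{dirmp} is already stated, I would simply invoke it and not re-verify this inclusion.
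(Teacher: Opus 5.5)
Your proposal is correct and follows the paper's own proof, which combines Definition \ref{dirMPECrefor}, Proposition \ref{dirmp} and Proposition \ref{gopt} in the same order. The steps are: show that $(\bar x,\bar y)$ solves (SCOP$_u$), pass to local optimality of (SCOP) in direction $(u,v)$, then read off (\ref{dirKKT}). Your explicit shrinking of $\varepsilon$, so that the directional neighborhood fits inside the ball of local optimality for (\ref{BP}), fills in a detail the paper leaves implicit.
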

\begin{proof}
The proof  follows by combining Definition \ref{dirMPECrefor}, Proposition \ref{dirmp} and Proposition \ref{gopt}.
\end{proof}

\begin{remark}		The directional metric subregularity, though weak, due to its abstract form, it is generally difficult to be verified by its definition. In  Proposition \ref{qp}, we list some verifiable sufficient conditions for directional metric subregularity, including the NNAMCQ, FOSCMS and ``affine+polyhedral'' condition. Applying these sufficient conditions to the system 
	$$( G(x,y), y,-\nabla_yf(x,y)) \in\mathbb R^q_- \times {\rm gph}\,{\widehat N}_{Y},$$	
	we have that NNAMCQ holds at $(\bar x,\bar y)$ if and only if there exists no nonzero vector $((\mu,\nu),\beta)\in\mathbb R^{2m}\times \mathbb{R}^q_+$   such that 
	\begin{eqnarray*}
		&&	0=\nabla( y,-\nabla_yf)(\bar x,\bar y)^T(\mu,\nu)+\nabla G(\bar x,\bar y)^T \beta,\\
		&&(\mu,\nu)\in N_{{\rm gph}{\widehat N}_{Y}}(\bar y,-\nabla_yf(\bar x,\bar y)),\ \beta\perp G(\bar x,\bar y), 
	\end{eqnarray*} and  FOSCMS holds in direction $(u,v)$ satisfying $$\nabla( y,-\nabla_yf)(\bar x,\bar y)(u,v) \in T_{{{\rm gph}{\widehat N}_{Y}}}(\bar y,-\nabla_yf(\bar x,\bar y)) $$ if and only if there 
	exists no nonzero vector $((\mu,\nu),\beta)\in\mathbb R^{2m}\times \mathbb{R}^q_+$   such that 
	\begin{eqnarray*}
		&&	0=\nabla( y,-\nabla_yf)(\bar x,\bar y)^T(\mu,\nu)+\nabla G(\bar x,\bar y)^T \beta, \beta\perp G(\bar x,\bar y), {\beta\perp \nabla G(\bar x,\bar y)(u,v)},\\
		&&(\mu,\nu)\in N_{{\rm gph}{\widehat N}_{Y}}(\bar y,-\nabla_yf(\bar x,\bar y);v,-\nabla(\nabla_yf)(\bar x,\bar y)(u,v)),
	\end{eqnarray*}
	and ``affine+polyhedral'' condition means that $Y$ is polyhedral and $G$ is affine.
	
\end{remark}

In the above constraint qualification and the necessary optimality condition, one needs to calculate $N_{{\rm gph}\widehat N_{Y}}(\bar y,-\nabla_yf(\bar x,\bar y))$ or its directional version $$ N_{{\rm gph}{\widehat N}_{Y}}(\bar y,-\nabla_yf(\bar x,\bar y);v,-\nabla(\nabla_yf)(\bar x,\bar y)(u,v)).$$ 		
Note that in case where $Y=[a,b]$ is an interval, since ${\rm gph}\widehat N_{Y}$ is the union of segments $(\{a\}\times\mathbb R_-)\cup([a,b]\times\{0\})\cup(\{b\}\times\mathbb R_+)$, we have
\begin{eqnarray*}
	&&N_{{\rm gph}N_{Y}}(\bar y,-\nabla_yf(\bar x,\bar y))\\
	=&&\left\{
	\begin{array}{ll}
		\mathbb R\times\{0\}	,\ &\mbox{if}~\bar y=a, \nabla_yf(\bar x,\bar y)>0,\\
		(\mathbb R_-\times\mathbb R_+)\cup(\mathbb R\times\{0\})\cup(\{0\}\times\mathbb R)	,\ &\mbox{if}~\bar y=a, \nabla_yf(\bar x,\bar y)=0,\\
		\{0\}\times\mathbb R	,\ &\mbox{if}~\bar y\in(a,b), \nabla_yf(\bar x,\bar y)=0,\\
		(\mathbb R_+\times\mathbb R_-)\cup(\mathbb R\times\{0\})\cup(\{0\}\times\mathbb R)	,\ &\mbox{if}~\bar y=b, \nabla_yf(\bar x,\bar y)=0,\\
		\mathbb R\times\{0\}	,\ &\mbox{if}~\bar y=b, \nabla_yf(\bar x,\bar y)<0.
	\end{array}\right.
\end{eqnarray*}
In the case where $Y=\mathrm{\Pi}_{i=1}^p[a_i,b_i]\subseteq\mathbb R^p$ is a box, one can apply the Cartesian product rule of the  normal cone and use the above formula. 

In the rest of this section, we will give our necessary optimality condition in details for two special and common cases.


\subsection{The case where $\bar y$ lies in the interior of $Y$}
In case $\bar y\in \operatorname{int}Y$, $\widehat{N}_{Y}(y)\equiv\{0\}$ near $\bar y$. Then $N_{{\rm gph}\widehat N_{Y}}(\bar y,-\nabla_yf(\bar x,\bar y))=\{0\}\times\mathbb R^m$. Hence Theorem \ref{opt} takes the following simple form in this case.
\begin{thm}\label{opt1}
	Let $(\bar x,\bar y)$ be a local minimizer of {\rm (\ref{BP})} 
	with $\bar y\in \operatorname{int}Y$. 
	Suppose the inf-compactness condition holds at $\bar x$ and $\nabla_{yy}^2f(\bar x,\bar y)$ is invertible. Suppose either $S(\bar x)=\{\bar y\}$ (e.g., the solution map is defined by (\ref{eqn13}) and  the assumptions in Proposition \ref{insc} hold) or $S(\bar x)$ is not equal to  $\{\bar y\}$ but there exists some direction $u\in\mathbb R^n$ satisfying  the directional derivative inequality (\ref{dirisc}) and  there exists a direction $v$ such that $\nabla \nabla_y f(\bar x,\bar y)(u,v)=0, \nabla G(\bar x,\bar y)(u,v) \leq 0$ and $\nabla F(\bar x,\bar y)(u,v)=0$. 
	Also, suppose that the system $G(x,y)\leq0, \nabla_y f(x,y)=0$ is metric subregularity at $(\bar x,\bar y)$ in the direction $(u,v)$. 
	Then, the directional M-stationary condition holds; that is, there exists a pair  $(\bar\nu,\bar\beta)\in\mathbb R^{m+q}$ with $\bar\beta\in\mathbb R^{q}_+$ such that
	\begin{eqnarray*}
		\nonumber
		&&	0= \nabla F(\bar x,\bar y)-\nabla(\nabla_{y}f)(\bar x,\bar y)^T\bar\nu
		+\nabla G(\bar x,\bar y)^T {\bar\beta},\\
		&& 
		\bar\beta\perp G(\bar x,\bar y),\ \bar\beta\perp \nabla G(\bar x,\bar y)(u,v).
	\end{eqnarray*}
\end{thm}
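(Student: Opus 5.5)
The plan is to derive Theorem \ref{opt1} from Theorem \ref{opt}. To do so I check its hypotheses one at a time, and then specialize the directional normal cone to the interior case.

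\emph{Step 1: an SCOP reformulation along $u$.} This is where Theorem \ref{genthm} is used, so I verify its conditions (i) and (ii).

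For (i): $\bar y\in\operatorname{int}Y$ and $\nabla^2_{yy}f(\bar x,\bar y)$ is invertible. Theorem \ref{IF}(ii) then gives a Lipschitz single-valued localization of $S_{\rm FO}$ near $(\bar x,\bar y)$, with $S_{\rm FO}(x)\cap(\bar y+\varepsilon_y\mathbb B)$ single-valued for all $x$ in a full neighborhood of $\bar x$. In particular this holds on any directional neighborhood $\bar x+\mathcal V_{\varepsilon_x,\delta}(u)$. Shrinking $\varepsilon_y$ keeps $\bar y+\varepsilon_y\mathbb B\subseteq\operatorname{int}Y$.

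For (ii), there are two cases.
\begin{itemize}
\item If $S(\bar x)=\{\bar y\}$, Proposition \ref{suffdinc} together with inf-compactness gives inner semicontinuity of $S$ at $(\bar x,\bar y)$.
\item Otherwise, the inequality (\ref{dirisc}) gives inner semicontinuity in direction $u$.
\end{itemize}
The remaining point in this step is to turn (directional) inner semicontinuity into $S(x)\cap(\bar y+\varepsilon_y\mathbb B)\neq\emptyset$ on some $\bar x+\mathcal V_{\varepsilon_x,\delta}(u)$. I argue by contradiction. Suppose no such $\varepsilon_x,\delta$ exist. Then there are $x^k\to\bar x$ within shrinking directional neighborhoods, so $x^k=\bar x+t_ku^k$ with $u^k\to u$ (normalizing $u$ when $u\ne0$), and $S(x^k)\cap(\bar y+\varepsilon_y\mathbb B)=\emptyset$. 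This contradicts Definition \ref{incont}. I expect this sequential-to-neighborhood passage to be the only delicate point. In particular, the directional neighborhood only fixes the direction up to positive scaling, so I use the sequential characterization in the Remark after Definition \ref{incont}.

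With (i) and (ii) in hand, Theorem \ref{genthm} yields the SCOP reformulation along $u$ in the sense of Definition \ref{dirMPECrefor}, after taking $\varepsilon=\min(\varepsilon_x,\varepsilon_y)$. In the case $S(\bar x)=\{\bar y\}$ this holds for every $u$, including $u=0$; there I take the $u,v$ provided by the hypotheses, or $u=v=0$ if none are singled out, and the conclusion then reads without the directional orthogonality.

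\emph{Step 2: the linearized cone and metric subregularity.} Near $\bar y$, $\widehat N_Y\equiv\{0\}$, so locally ${\rm gph}\,\widehat N_Y=Y\times\{0\}$, which coincides with $\mathbb R^m\times\{0\}$ around $(\bar y,0)$. Hence the tangent cone is $T_{{\rm gph}\widehat N_Y}(\bar y,0)=\mathbb R^m\times\{0\}$. The condition $(u,v)\in L(\bar x,\bar y)$ then reduces to
\[
\nabla\nabla_yf(\bar x,\bar y)(u,v)=0,\qquad \nabla G_i(\bar x,\bar y)(u,v)\le0 \ \ (i\in I_G),
\]
and this is implied by the assumptions. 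By the same local identification, the constraint system of (SCOP) agrees near $(\bar x,\bar y)$ with $G\le0,\ \nabla_yf=0$. So the assumed directional metric subregularity transfers directly, since both are defined by distances computed locally. The hypothesis $\nabla F(\bar x,\bar y)(u,v)=0$ is given.

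\emph{Step 3: conclusion.} Theorem \ref{opt} now yields (\ref{dirKKT}). It remains to compute the directional normal cone. Because ${\rm gph}\,\widehat N_Y$ is locally the subspace $\mathbb R^m\times\{0\}$, for the direction $(v,0)$ we get
\[
N_{{\rm gph}\widehat N_Y}\bigl(\bar y,0;(v,0)\bigr)=\{0\}\times\mathbb R^m,
\]
for instance via (\ref{convNormal}), since the set is locally convex. Hence $\bar\mu=0$. Substituting into the first line of (\ref{dirKKT}), with $\nabla(y,-\nabla_yf)^T(0,\bar\nu)=-\nabla(\nabla_yf)^T\bar\nu$, gives exactly the stated system, together with $\bar\beta\ge0$, $\bar\beta\perp G$ and $\bar\beta\perp\nabla G(\bar x,\bar y)(u,v)$.
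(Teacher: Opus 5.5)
Your proposal is correct and follows the same route as the paper, which only sketches this argument. You obtain the SCOP reformulation along $u$ from Theorem \ref{IF}(ii), Proposition \ref{suffdinc} and Theorem \ref{genthm}, apply Theorem \ref{opt}, and use that ${\rm gph}\,\widehat N_Y$ coincides with $\mathbb R^m\times\{0\}$ near $(\bar y,0)$, so the (directional) normal cone is $\{0\}\times\mathbb R^m$ and $\bar\mu=0$. Your explicit checks of $(u,v)\in L(\bar x,\bar y)$ and of the transfer of directional metric subregularity fill in steps the paper leaves implicit. One correction: your contradiction argument with shrinking $\delta$ already works directly from Definition \ref{incont}, so you should not invoke the cone characterization in the Remark after it, which is in fact stronger than the definition.
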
				

In the following example, we show that the first order approach fails in the classical sense but can hold along certain direction. This illustrates the advantage of directional first order approach for bilevel program without lower level convexity.		

\begin{example}\label{modified Mirrlees}
	Recall that the feasible region in Mirrlees’ example exhibits a jump at the point $(1, y_0)$, where $0<y_0=0.957<1$ and satisfies the condition $(1-y_0)e^{4y_0}/(1+y_0)=1$. 
	Here we present a modification of Mirrlees’ example featuring a two dimensional upper variable.
	\begin{equation}\label{modifiedMirrlees}
		\begin{aligned}
			\min \quad &F(x_1, x_2,  y)=(x_1-0.5)^2+(x_1+x_2)(1+y)-(1-y)e^{4y} + (y-y_0)^3\\
			\mathrm { s.t. }\quad
			& y \text { minimizes }  f(x_1,x_2, y)=-(x_1+x_2) e^{-(y+1)^2}-e^{-(y-1)^2} . 
		\end{aligned}
	\end{equation}
	Similar to Mirrlees’ example, the global optimal solutions of the lower level problem form a disconnected hypersurface that exhibits a jump along the line $x_1+x_2=1$ on the smooth hypersurface 
	\begin{equation*}
		S_{\mathrm{FO}}:=\left\{
		(x_1,x_2,y)\in\mathbb{R}^3  \mid  \frac{1-y}{1+y}e^{4y}=x_1+x_2
		\right\},
	\end{equation*}
	which is determined by the first order condition $\nabla_y f(x,y)=0$. 
	Moreover, one can verify that all bilevel feasible points in a small neighborhood $U_0$ of $(\bar{x},\bar{y}):=(0.5,0.5,y_0)^T$ satisfy $y\geq y_0$.
	
	We first claim that $(\bar{x},\bar{y})$ is a local minimizer of \eqref{modifiedMirrlees}. Indeed, since $F(\bar{x},\bar{y})=0$, it suffices to show that $F(x,y)\geq 0$ in a small neighborhood of $(\bar{x},\bar{y})$ within the bilevel feasible region. 
	To this end, note that for any bilevel feasible point $(x,y)\in U_0$, the first order condition $\nabla_y f(x,y)=0$ implies that $F(x,y)=(x_1-0.5)^2+ (y-y_0)^3\geq0$ since $y\geq y_0$ for all bilevel feasible points in $U_0$. 
	
	Secondly, for any neighborhood of $(\bar{x},\bar{y})$ on the hypersurface $S_{\mathrm{FO}}$ defined by the first order condition, there exists a point $(\bar{x},y)$ such that $F(\bar{x},y)=(y-y_0)^3<0$. Hence, the point $(\bar{x},\bar{y})$ is not a local minimizer of $F(x,y)$ on $S_{\mathrm{FO}}$.
	This demonstrates that the first order approach fails at $(\bar{x},\bar{y})$. 
	
	Thirdly, according to Mirrlees' example, $S(\bar x)=\{-0.957, 0.957\}$.  Since we have $\nabla_x f(\bar x,0.957)u<\nabla_xf(\bar x,-0.957)u$, the directional derivative inequality (\ref{dirisc}) holds. By Proposition \ref{suffdinc}, $S$ is inner semi-continuous at $(\bar x,\bar y)$ along direction $u$. Moreover $\nabla_{yy}^2 f(\bar x,\bar y)\not =0$ and hence by Theorem \ref{IF}, $S_{\rm FO}(x)$ has a single-valued localization near $ (\bar x,\bar y)$. Then combining Theorem \ref{genthm}, and Proposition \ref{dirmp}, $(\bar{x},\bar{y})$ is a local solution of (SCOP)  which takes the form 
	$$ \min F(x,y)~ \mbox{ s.t. }  \nabla_y f(x,y)=0$$	
	in direction $(u,v)$ for any $v\in\mathbb R^m$. 
	
	Finally, there is $v=-\nabla_{yy}^2f(\bar x,\bar y)^{-1}\nabla_{xy}^2f(\bar x,\bar y)u$ such that $$\nabla \nabla_y f(\bar x,\bar y)(u,v)=0~\mbox{and}~\nabla F(\bar x,\bar y)(u,v)=0.$$ That is $(u,v)$ is a nonzero critical direction.
	Since $\nabla_{yy}^2 f(\bar x,\bar y)\not =0$, the Linear Independence constraint qualification holds for the system $\nabla_y f(x,y)=0$ at $(\bar x,\bar y)$.
	Applying Theorem \ref{opt1}, the directional KKT condition holds at the optimal solution $(\bar x,\bar y)$. Indeed, there exists a vector $\bar\nu =\frac{1}{2}e^{(y_0+1)^2}\approx 23.1 \in\mathbb R$ such that
	\begin{eqnarray*}
		0= \nabla F(\bar x,\bar y)-\nabla(\nabla_{y}f(x,y))(\bar x,\bar y)^T\bar\nu.
	\end{eqnarray*}			
\end{example}
\subsection{The case where the lower level constraints are defined by smooth inequalities}		

In this subsection we derive the optimality conditions for the case where the lower level constraints are defined by twice continuously differentiable  inequalities.


\begin{thm}\label{KKT1}
	Let $(\bar x,\bar y)$ be a local minimizer of {\rm (\ref{BP})} where $Y=\{y\in\mathbb R^m \mid  g(y) \leq 0\}$ and $g:\mathbb R^m\rightarrow\mathbb R^p$ is twice continuously differentiable.
	Suppose the LICQ holds  for the system  $g(y)\leq0$ at $\bar y$,
	and  the strong second order sufficient condition holds for $(\bar x,\bar y,\bar\lambda)$ where $\bar \lambda$ is the unique multiplier.
	Suppose the inf-compactness condition holds at $\bar x$ and there exists some direction $u\in\mathbb R^n$ satisfying the directional derivative inequality (\ref{dirisc}). Suppose there exists $v\in\mathbb R^m$ and $e^*\in\mathbb R^p$ such that $\nabla F(\bar x,\bar y)(u,v)=0$ and 
	\begin{align}\label{lcone}
			\begin{aligned}
				& 0=\nabla ( \nabla_y f+\nabla g^T \bar \lambda) (\bar x,\bar y) (u,v)+\nabla g(\bar y)^Te^*\\
				& \nabla G_i(\bar x,\bar y)(u,v)\leq0,\ i\in I_G(\bar x,\bar y),\
				\nabla g_j(\bar y){}v\leq0,\ j\in I_g(\bar y),
			\end{aligned}
		\end{align} 
		where $e^*_j=0$ if either $g_j(\bar y)<0$ or $g_j(\bar y)=\bar\lambda_j=0, \nabla g_j(\bar y)v<0$, and $e^*_j\geq0$ if $g_j(\bar y)=\bar\lambda_j=\nabla g_j(\bar y)
		{^T}v=0$, $j=1,\ldots,p$.
	Suppose FOSCMS holds at $(\bar x,\bar y)$	in direction $(u,v)$. That is, there is no nonzero $(\nu,\beta)\in\mathbb R^{m}\times \mathbb R^q_+$ and $b^* \in \mathbb R^p$ such that   
	\begin{eqnarray*}
		\nonumber
		&&	0={ -\nabla(\nabla_yf+\nabla g^T\bar \lambda)(\bar x,\bar y)^T { \nu}+(0, \nabla g(\bar y)^Tb^*)}+\nabla G(\bar x,\bar y)^T {\beta},\\ &&\beta\perp G(\bar x,\bar y),\
		\beta\perp \nabla G(\bar x,\bar y)(u,v),\ {\nabla g_j(\bar y)\nu\leq0,\ j\in I_g(\bar y),}
	\end{eqnarray*}
	where  $ b^*_j=0$ if either $g_j(\bar y)<0$ or $g_j(\bar y)= \bar\lambda_j=0, \nabla g_j (\bar y) \nu <0$, and $ b^*_j\geq0$ if $g_j(\bar y)= \bar\lambda_j=\nabla g_j (\bar y) v=e^*_j=0, \nabla g_j (\bar y)^T\nu <0$ for $j=1,\ldots,p$.
	Then  the M-stationary condition in direction $(u,v)$ holds, i.e., there exist $(\bar\nu,\bar\beta)\in\mathbb R^{m}\times \mathbb R^q_+$ and $b^* \in \mathbb R^p$ such that
	\begin{align*}
		&	0=\nabla F(\bar x,\bar y)-\nabla(\nabla_yf+\nabla g^T\bar \lambda)(\bar x,\bar y)^T {\bar \nu}+(0, \nabla g(\bar y)^Tb^*)+\nabla G(\bar x,\bar y)^T {\bar\beta},\\ &\bar\beta\perp G(\bar x,\bar y),\
		\bar\beta\perp \nabla G(\bar x,\bar y)(u,v),\\
		& b_j^*=0 \mbox{ if  either } g_j(\bar y)<0 \mbox{ or } g_j(\bar y)=\bar \lambda_j=0, \nabla g_j (\bar y)v<0, e^*_j=0;\\
		& \nabla g_j (\bar y)^T \bar \nu =0 \mbox{ if } \text{either } g_j(\bar y)=\bar \lambda_j=\nabla g_j (\bar y)v=0, e^*_j>0,\\
		&\qquad\qquad\qquad~~~ \mbox{ or } g_j(\bar y)=\nabla g_j (\bar y)v=0, \bar \lambda_j>0;\\
		&\mbox{either } b^*_j>0, \nabla g_j (\bar y)^T \bar \nu <0 \mbox{ or }  b_j^* \nabla g_j (\bar y)^T \bar \nu =0\\ 
		&\qquad\qquad\qquad\qquad\qquad\qquad\qquad\qquad \mbox{ if } g_j(\bar y)= \bar \lambda_j =\nabla g_j (\bar y)v=\bar \lambda_j=0.
	\end{align*}

		\end{thm}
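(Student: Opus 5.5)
The plan is to obtain Theorem \ref{KKT1} from Theorem \ref{opt} in three stages. First, show that (\ref{BP}) has an SCOP reformulation along $u$. Second, rewrite the abstract constraint $(y,-\nabla_y f(x,y))\in{\rm gph}\,\widehat N_Y$ locally as a KKT system with the multiplier $\lambda$ eliminated. Third, compute the directional limiting normal cone of the complementarity set in order to translate (\ref{dirKKT}), the linearized cone $L(\bar x,\bar y)$ and FOSCMS into the explicit sign conditions on $b^*,e^*,\bar\nu$ stated in the theorem.

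\emph{Step 1 (reformulation).} LICQ implies MFCQ and CRCQ, and under LICQ the multiplier $\bar\lambda$ is unique. Together with the strong second order sufficient condition, Theorem \ref{IF3} then gives a Lipschitz single-valued localization of $S_{\rm FO}$ around $(\bar x,\bar y)$, which is condition (i) of Theorem \ref{genthm} for every direction. Inf-compactness and the directional derivative inequality (\ref{dirisc}) give, by Proposition \ref{suffdinc}, inner semi-continuity of $S$ at $(\bar x,\bar y)$ in direction $u$. This implies condition (ii), after shrinking $\varepsilon_x,\delta$ via a standard contradiction-by-sequences argument. Hence Theorem \ref{genthm} yields an SCOP reformulation along $u$ in the sense of Definition \ref{dirMPECrefor}.

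\emph{Step 2 (local representation).} By (\ref{regularity}), near $\bar y$ we have $\widehat N_Y(y)=\{\nabla g(y)^T\lambda : 0\le\lambda\perp g(y)\le0\}$. By LICQ (which persists nearby), $\lambda=\lambda(y,\xi)$ is uniquely and smoothly determined, for instance by solving with $\nabla g_{I_g(\bar y)}(y)$. Near $(\bar y,-\nabla_y f(\bar x,\bar y))$, the set ${\rm gph}\,\widehat N_Y$ is therefore the image of the smooth manifold-with-complementarity $\{(y,\lambda): 0\le\lambda\perp -g(y)\ge0\}$ under $(y,\lambda)\mapsto(y,\nabla g(y)^T\lambda)$, and this map is a local diffeomorphism onto its image with respect to the relevant coordinates. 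The standard change-of-coordinates formula for limiting and directional normal cones under such maps (as in \cite{YY}, Theorem 4.1, now in directional form) expresses
\[
N_{{\rm gph}\widehat N_Y}\bigl(\bar y,-\nabla_y f(\bar x,\bar y);v,-\nabla(\nabla_y f)(\bar x,\bar y)(u,v)\bigr)
\]
through $\nabla g(\bar y)$, $\nabla^2(\bar\lambda^Tg)(\bar y)$ and the directional normal cone $N_{{\rm gph}N_{\mathbb R^p_-}}\bigl((g(\bar y),\bar\lambda);(\nabla g(\bar y)v,e)\bigr)$. Here $e$ is the directional derivative of $\lambda$. The tangent relation defining $L(\bar x,\bar y)$ becomes exactly (\ref{lcone}), with $e^*=e$ restricted by the complementarity tangent cone; this gives the stated sign pattern on $e^*_j$.

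\emph{Step 3 (componentwise computation).} Since ${\rm gph}N_{\mathbb R^p_-}$ is a product of planar complementarity angles, its directional normal cone splits into components $(b^*_j,c_j)$. For each index we compute the directional normal cone of $\{(a,l): a\le0,\ l\ge0,\ al=0\}$ at $(g_j(\bar y),\bar\lambda_j)$ in direction $(\nabla g_j(\bar y)v,e^*_j)$, using the planar formula analogous to the interval case listed before Section 4.1. This produces the cases: inactive; strictly active; biactive with direction into one branch; and fully degenerate biactive, where the M-type ``either $b^*_j>0,\ \nabla g_j\nu<0$ or $b^*_j\nabla g_j\nu=0$'' condition appears, with $c_j=\nabla g_j(\bar y)\nu$ after the change of variables. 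Substituting into (\ref{dirKKT}) gives the claimed M-stationarity system. Substituting the same computation into the FOSCMS condition of Proposition \ref{qp} gives the hypothesized FOSCMS, which ensures the directional metric subregularity needed in Theorem \ref{opt}.

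The main obstacle is Step 2: justifying the directional chain rule for the normal cone of ${\rm gph}\,\widehat N_Y$ through the multiplier map. Equality, not just inclusion, is needed both to translate the hypotheses and to transfer the metric subregularity. The first approach to try is to use LICQ to make $(y,\lambda)\mapsto(y,\nabla g(y)^T\lambda)$ a local homeomorphism between the two graphs with smooth inverse, so that regular normals correspond exactly and the directional limits pass through.
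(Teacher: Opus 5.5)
Your proposal is correct and follows the paper's proof. Theorem \ref{IF3}, Proposition \ref{suffdinc} and Theorem \ref{genthm} give the SCOP reformulation along $u$, and Theorem \ref{opt}, with FOSCMS via Proposition \ref{qp}, is then made explicit through the tangent and directional normal cone formulas for ${\rm gph}\,\widehat N_Y$ and the componentwise complementarity-angle computation with the Cartesian product rule. The only difference is that the paper cites \cite[Theorem 4.2]{BGYZZ} for these cone formulas under LICQ, whereas you derive them from the local embedding $(y,\lambda)\mapsto(y,\nabla g(y)^T\lambda)$; this is a valid self-contained substitute, and for the FOSCMS transfer and the final conclusion only the upper estimate of the normal cone is actually needed.
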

		\begin{proof}
		By Theorem \ref{IF3}, Proposition \ref{suffdinc} and Theorem \ref{genthm}, the constraints $y\in S(x)$ and $(y,-\nabla_yf(x,y))\in{\rm gph}\,\widehat{N}_{Y}$  are equivalent over the directional   neighborhood of  $(\bar x,\bar y)$ in direction $(u,v)$.\ It follows that $(\bar x,\bar y)$ is also  a local minimizer of problem (SCOP) in direction $(u,v)$.
		
		By (\ref{regularity}), 
		\begin{eqnarray*}{\rm gph}{\widehat N}_{Y}  &=&\left \{(y, \nabla g(y)^\top \lambda ) |   \lambda \in N_{\mathbb{R}_-^p}( g(y))\right \}.
		\end{eqnarray*}

		Since LICQ holds for the system $g(y)\leq 0$ at $\bar y$,   by   \cite[Theorem 4.2]{BGYZZ}, we have 
		\begin{eqnarray*}
			&&T_{{\mathrm gph}\, {\widehat N}_{Y}}(\bar y,\nabla g(\bar y)^T\bar\lambda)\\
			&=&\left\{\left(
			\begin{array}{c}
				v\\
				(\nabla ({ \nabla g})(\bar y)v)^T\bar\lambda+\nabla g(\bar y)^Te^*
			\end{array}\right)\left|\left(
			\begin{array}{c}
				\nabla g(\bar y)v\\
				e^*
			\end{array}
			\right)\in T_{{\mathrm gph}\, N_{\mathbb R^p_-}}(g(\bar y),\bar\lambda)
			\right.\right\},
		\end{eqnarray*}
		where for each $j=1,\dots, p$,
		\begin{eqnarray*}	
			&& T_{{\mathrm gph}\, { N}_{\R_- }}( g_j(\bar y),\bar\lambda_j)
			= \left\{
			\begin{array}{ll}
				\mathbb R \times \{0\},~~& \mathrm{if}~g_j(\bar y)<0, \bar \lambda_j=0, \\
				\mathbb R_-\times \{0\} \cup \{0\} \times \mathbb R_+ ,~~&\mathrm{if}~g_j(\bar y)=\bar\lambda_j=0, \\
				\{0\} \times \mathbb R ,~~&\mathrm{if}~g_j(\bar y)=0, \bar\lambda_j>0.
			\end{array}
			\right. 
		\end{eqnarray*}
		Condition $(u,v)$ being a critical direction of SCOP means that $ (u,v)\in L(\bar x,\bar y)$ and 
		$\nabla F(\bar x,\bar y)(u,v)=0$. Since the linearized cone $L(\bar x,\bar y)$ is defined by the set of directions $(u,v)$ satisfying that 
		\begin{align*}
		&\nabla G_i(\bar x,\bar y)(u,v)\leq 0{(\forall i\in I_G(\bar x,\bar y))},\\ &(v, -\nabla (\nabla_y f)(\bar x,\bar y)(u,v))\in T_{{\mathrm gph}\,{\widehat N}_{Y}}(\bar y,\nabla g(\bar y)^T\bar\lambda),
		\end{align*}
		we obtain (\ref{lcone}).
		We now apply Theorem \ref{opt}. Under FOSCMS at $(\bar x,\bar y)$ in direction $(u,v)$, condition (\ref{dirKKT})---the KKT condition  in direction $(u,v)$ holds, that is,  there exists $((\bar\mu,\bar\nu),\bar\beta)\in\mathbb R^{2m}\times \mathbb R^q_+$ such that 
		\begin{eqnarray*}
			\nonumber
			&&	0=\nabla F(\bar x,\bar y)+(0, \bar \mu) -\nabla (\nabla_y f)(\bar x,\bar y)\bar \nu+\nabla G(\bar x,\bar y)^T {\bar\beta},\\
			&&\ \bar\beta\perp G(\bar x,\bar y),
			\bar\beta\perp \nabla G(\bar x,\bar y)( u, v),\\
			&&(\bar\mu,\bar\nu)\in N_{{\rm gph}{\widehat N}_{Y}}(\bar y,-\nabla_yf(\bar x,\bar y);v,-\nabla(\nabla_yf)(\bar x,\bar y)(u,v)).
		\end{eqnarray*}
		
		Let $\bar \zeta =\nabla g(\bar y)^\top \bar \lambda=-\nabla_yf(\bar x,\bar y)$. By   \cite[Theorem 4.2]{BGYZZ}
		\begin{eqnarray*}
			& & N_{{\rm gph}{\widehat N}_{Y}}(\bar y,\bar \zeta;v,-\nabla(\nabla_yf)(\bar x,\bar y)(u,v)))\\
			\qquad \qquad &=&
			\left\{ (\mu, \nu) \left|
			\begin{array}{l}
				\mu =-\nabla(\nabla g(\bar y)^T \bar \lambda) \nu
				+\nabla g(\bar y)^Tb^*  \\
				(b^*, \nabla g(\bar y) \nu) \in N_{{\rm gph} N_{\R_-^p}}((g(\bar y), \bar {\lambda});\nabla g(\bar y)v, e^*)
			\end{array}\right.
			\right \}.
		\end{eqnarray*} Since for any real numbers $a,b,c,d$ with $a\leq 0, b\geq 0, c\leq 0$, we have 
		\begin{eqnarray*}
			&& 
			N_{{\rm gph}N_{\R_-}}((a,b);(c,d))\\
			&& =
			\begin{cases}
				\{0\} \times \R & a<0, b=0, c\in \R, d=0, \\
				\{0\} \times \R & a=0, b=0, c<0, d=0,\\
				(\{0\} \times \R)\cup (\R\times \{0\})\cup (\R_+\times \R_-) & a=b=c=d=0,\\
				\R\times \{0\} & a=0, b=0, c=0, d>0,\\
				\R\times \{0\} & a=0, b>0, c=0, d\in \R,
			\end{cases}
		\end{eqnarray*}
		{taking into account the Cartesian product rule of directional normal cone (see \cite[Proposition 3.2]{YZ18})}	the rest of the proof follows. 	
		\end{proof}

		In the following theorem we extend the classical result of \cite[Theorem 4.1]{YY} to allow the lower level program to be nonconvex.
		\begin{thm}\label{KKT2}
			Let $(\bar x,\bar y)$ be a local minimizer of {\rm (\ref{BP})} where $Y=\{y\in\mathbb R^m \mid  g(y) \leq 0\}$ and $g:\mathbb R^m\rightarrow\mathbb R^p$ is twice continuously differentiable.
			Suppose the lower level solution  $S(\bar x)=\{\bar y\}$ is unique, the inf-compactness condition holds at $\bar x$ and LICQ holds  for the system  $g(y)\leq0$ at $\bar y$,
			and  the strong second order sufficient condition holds for $(\bar x,\bar y,\bar\lambda)$ where $\bar \lambda$ is the unique multiplier.
			Suppose the inf-compactness condition holds at $\bar x$.  
			If  the metric subregularity holds for the system 
			\begin{equation}\label{eqn4.4} G(x,y)\leq0,\
				(y,-\nabla_yf(x,y))\in{\rm gph}\,{\widehat N}_{Y}
			\end{equation} at $(\bar x,\bar y)$,  then   the M-stationary condition holds, i.e., there exist $(\bar\nu,\bar\beta)\in\mathbb R^{m}\times \mathbb R^q_+$ and $ b^*\in \R^p$ such that 
			\begin{eqnarray*}
				\nonumber
				&&	0=\nabla F(\bar x,\bar y)-\nabla(\nabla_yf+\nabla g^T\bar \lambda)(\bar  x,\bar y)^T  \bar \nu+(0,  \nabla g(\bar y)^T b^*) +\nabla G(\bar x,\bar y)^T {\bar\beta},\ \\
				&& \bar\beta\perp G(\bar x,\bar y),\\
				&& b_j^*=0 \mbox{ if } g_j(\bar y)<0, \bar \lambda_j =0;\
				\nabla g_j (\bar y)^T \bar \nu =0 \mbox{ if } g_j(\bar y)=0, \bar \lambda_j >0;\\
				&& \mbox{ either } b^*_j>0, \nabla g_j (\bar y)^T \bar \nu <0 \mbox{ or }  b_j^* \nabla g_j (\bar y)^T \bar \nu =0 \mbox{ if } g_j(\bar y)= \bar \lambda_j =0.
			\end{eqnarray*}
			
		\end{thm}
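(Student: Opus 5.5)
The argument is the non-directional ($u=0$) version of the proof of Theorem \ref{KKT1}, combined with Theorem \ref{opt} and the formulas for $T$ and $N$ of ${\rm gph}\,\widehat N_Y$ from \cite[Theorem 4.2]{BGYZZ}.

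\emph{Step 1: reduction to (SCOP).} Since $S(\bar x)=\{\bar y\}$ and inf-compactness holds at $\bar x$, the first part of Proposition \ref{suffdinc} gives inner semi-continuity of $S$ at $(\bar x,\bar y)$ in the classical sense. Hence condition (ii) of Theorem \ref{genthm} holds with $u=0$ on a full neighborhood.

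LICQ for $g(y)\le 0$ at $\bar y$ implies MFCQ and CRCQ. Together with the SSOSC at the unique multiplier $\bar\lambda$, Theorem \ref{IF3} gives a Lipschitz single-valued localization of $S_{\rm FO}$ around $(\bar x,\bar y)$. Note that near $\bar y$ the set $S_{\rm FO}$ coincides with the KKT solution set by (\ref{regularity}), since MSCQ follows from LICQ. So condition (i) of Theorem \ref{genthm} holds with $u=0$.

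Theorem \ref{genthm} then shows that (BP) has an SCOP reformulation at $(\bar x,\bar y)$ along $u=0$. Therefore $(\bar x,\bar y)$ is a classical local minimizer of (SCOP) restricted to a neighborhood.

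\emph{Step 2: M-stationarity for (SCOP).} Take $(u,v)=(0,0)$. This direction trivially lies in $L(\bar x,\bar y)$ and satisfies $\nabla F(\bar x,\bar y)(0,0)=0$. Directional metric subregularity in direction $0$ is exactly the assumed metric subregularity of (\ref{eqn4.4}). Theorem \ref{opt} with $u=0$, $v=0$ then yields $(\bar\mu,\bar\nu)\in N_{{\rm gph}\widehat N_Y}(\bar y,-\nabla_y f(\bar x,\bar y))$ and $\bar\beta\ge 0$ with $\bar\beta\perp G(\bar x,\bar y)$ such that
\[
0=\nabla F(\bar x,\bar y)+(0,\bar\mu)-\nabla(\nabla_y f)(\bar x,\bar y)^T\bar\nu+\nabla G(\bar x,\bar y)^T\bar\beta .
\]
Here the directional normal cone in direction $0$ is the limiting normal cone. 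The condition $\bar\beta\perp\nabla G(0,0)$ is vacuous.

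\emph{Step 3: computing the normal cone.} By (\ref{regularity}), ${\rm gph}\,\widehat N_Y=\{(y,\nabla g(y)^T\lambda)\mid \lambda\in N_{\mathbb R^p_-}(g(y))\}$ near $\bar y$. Under LICQ, \cite[Theorem 4.2]{BGYZZ} with zero direction gives
\[
\bar\mu=-\nabla(\nabla g(\cdot)^T\bar\lambda)(\bar y)^T\bar\nu+\nabla g(\bar y)^Tb^*,\qquad (b^*,\nabla g(\bar y)\bar\nu)\in N_{{\rm gph}N_{\mathbb R^p_-}}(g(\bar y),\bar\lambda).
\]
Substituting this into Step 2 merges $(0,-\nabla(\nabla g^T\bar\lambda)^T\bar\nu)$ with $-\nabla(\nabla_y f)^T\bar\nu$ into $-\nabla(\nabla_y f+\nabla g^T\bar\lambda)(\bar x,\bar y)^T\bar\nu$, since $g$ does not depend on $x$.

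The limiting normal cone to ${\rm gph}N_{\mathbb R_-}$ splits componentwise via the product rule, with the following three cases:
\begin{itemize}
\item if $g_j(\bar y)<0$, it is $\{0\}\times\mathbb R$, so $b^*_j=0$;
\item if $g_j(\bar y)=0$ and $\bar\lambda_j>0$, it is $\mathbb R\times\{0\}$, so $\nabla g_j(\bar y)^T\bar\nu=0$;
\item if $g_j(\bar y)=\bar\lambda_j=0$, it is $(\mathbb R\times\{0\})\cup(\{0\}\times\mathbb R)\cup(\mathbb R_+\times\mathbb R_-)$. This is exactly ``either $b^*_j>0$ and $\nabla g_j(\bar y)^T\bar\nu<0$, or $b^*_j\nabla g_j(\bar y)^T\bar\nu=0$''.
\end{itemize}
These are the stated conditions.

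The main obstacle is Step 3. The point to verify is that \cite[Theorem 4.2]{BGYZZ} yields an equality for the limiting normal cone to ${\rm gph}\,\widehat N_Y$ under LICQ alone, in the nondirectional case. This relies on uniqueness of $\bar\lambda$ and on the local Lipschitz invertibility of the multiplier map, and the sign conventions must be carried through carefully. Step 1 is routine given the earlier results.
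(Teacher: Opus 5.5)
Your proposal is correct and follows essentially the same route as the paper: reduce to (SCOP) with $u=0$ via Proposition~\ref{suffdinc}, Theorem~\ref{IF3} and Theorem~\ref{genthm}; obtain M-stationarity for (SCOP) under metric subregularity; then compute $N_{{\rm gph}\widehat N_Y}$ via the LICQ formula and the componentwise normal cones to ${\rm gph}N_{\mathbb R_-}$. The only difference is that in Step 2 the paper invokes the classical nondirectional result \cite[Theorem 3.2]{YY}, whereas you specialize Theorem~\ref{opt} to $(u,v)=(0,0)$, which yields the same condition.
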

		\begin{proof} By Theorems \ref{genthm}, \ref{IF3} and Proposition \ref{suffdinc}, (\ref{BP}) allows SCOP reformulation at $(\bar x,\bar y)$ in direction $(u,v)=(0,0)$. That is, 
		$(\bar x,\bar y)$ is a local minimizer of SCOP.   By \cite[Theorem 3.2]{YY}, under the metric subregularity for the system (\ref{eqn4.4}),  
		there exists $((\bar\mu,\bar\nu),\bar\beta)\in\mathbb R^{2m}\times \mathbb R^q_+$ such that
		\begin{eqnarray*}
			\nonumber
			&&	0=\nabla F(\bar x,\bar y)+(0, \bar \mu) -\nabla (\nabla_y f)(\bar x,\bar y)\bar \nu
			+\nabla G(\bar x,\bar y)^T {\bar\beta},\\
			&& (\bar\mu,\bar\nu)\in N_{{\rm gph}{\widehat N}_{Y}}(\bar y,-\nabla_yf(\bar x,\bar y)),\bar\beta\perp G(\bar x,\bar y).
		\end{eqnarray*}
		Since 
		\begin{eqnarray*}
			N_{{\rm gph}{\widehat N}_{Y}}(\bar y,-\nabla_yf(\bar x,\bar y))&=&
			\left\{ (\mu, \nu) \left|
			\begin{array}{l}
				\mu =-\nabla(\nabla g(\bar y)^T \bar \lambda) \nu
				+\nabla g(\bar y)^Tb^*  \\
				(b^*, \nabla g(\bar y) \nu) \in N_{{\rm gph} N_{\R_-^p}}((g(\bar y), \bar {\lambda}))
			\end{array}\right.	\right \}		
		\end{eqnarray*}
		and \[
		N_{{\rm gph}N_{\R_-}}(g_j(\bar y), \bar {\lambda}_j)=
		\begin{cases}
			\{0\}
			\times \R & g_j(\bar y)<0, \bar {\lambda}_j=0, \\
			(\{ 0\} \times \R) \cup (\R \times \{0\})\cup (\R_+\times \R_-) & g_j(\bar y)=0,\bar {\lambda}_j=0,\\
			\R\times \{0\} & g_j(\bar y)=0, \bar {\lambda}_j>0,
		\end{cases}
		\] the desired results follow.
		
		\end{proof}

				\section{Conclusion}			
				
				It is known that for (\ref{BP}) with nonconvex lower level program, the classical first order approach usually fails and the value function approach, as well as the combined approach, leads to nonsmooth optimization programs with degenerate constraint system. In this paper, we propose a directional first order approach for deriving necessary optimality conditions for (\ref{BP}). When the lower level program is convex, this approach recovers the classical first order approach by taking zero as the direction. And for (\ref{BP}) with nonconvex lower level program, our analysis demonstrated that the first order approach works over some directional neighborhood in the presented nonconvex setting. Hence, we obtain an equivalent single level reformulation for general (\ref{BP}) without the value function.  Applying the directional optimality theory,  some directional constraint qualifications can be applied to establish directional KKT conditions for (\ref{BP}). On the other hand, the application of our approach is limited by the assumption of existence of a non-ascent direction.


				

\end{document}